\documentclass[12pt]{amsart}

\textwidth=15cm \textheight=22cm \topmargin=0.5cm
\oddsidemargin=0.5cm \evensidemargin=0.5cm

\usepackage{amsmath, amssymb, amsthm}%
\usepackage{subfig}
\usepackage{tikz, tikz-3dplot}

\usepackage[backend=bibtex, backref=false, doi=false]{biblatex}
\usepackage[colorlinks, hyperfootnotes=false]{hyperref}

\usepackage{cleveref}

%---------------------------------------------------------------------------------------------

\theoremstyle{plain}
\newtheorem{lemma}{Lemma}[section]
\newtheorem{theorem}[lemma]{Theorem}
\newtheorem*{thm*}{Theorem}
\newtheorem{corollary}[lemma]{Corollary}
\newtheorem*{cor*}{Corollary}
\newtheorem{proposition}[lemma]{Proposition}

\theoremstyle{definition}
\newtheorem{definition}[lemma]{Definition}
\newtheorem{example}[lemma]{Example}
\newtheorem{remark}[lemma]{Remark}

%---------------------------------------------------------------------------------------------

\newcommand{\qq}[1]{``#1''}

\newcommand{\Th}[1]{$#1$\textsuperscript{th}}

\newcommand{\set}[1]{\left\{\,#1\,\right\}}
\newcommand{\with}{\ \vrule\ }
\newcommand{\defa}{\mathrel{\mathop{:}}=}

\newcommand{\kk}{\mathbb{K}}
\newcommand{\NN}{\mathbb{N}}
\newcommand{\RR}{\mathbb{R}}
\newcommand{\ZZ}{\mathbb{Z}}
\newcommand{\QQ}{\mathbb{Q}}

\newcommand{\gp}[1]{\ZZ #1}
\newcommand{\cone}[1]{\RR_+ #1}

\newcommand{\kQ}[1][Q]{\kk[#1]}
\newcommand{\kQsat}{\kQ[\sat{Q}]}
\newcommand{\kmod}[1]{\kk\{#1\}}
\newcommand{\kQsatm}{\kmod{\sat{Q}}}
\newcommand{\Quot}{\kQsatm/\kQ}
\newcommand{\Fsat}{F^{\sat{Q}}}
\newcommand{\pF}{\mathfrak{p}_F}
\newcommand{\sat}[1]{\overline{#1}}
\newcommand{\QoQ}{\sat{Q}\setminus Q}
\newcommand{\QoQF}{\sat{Q}_F \setminus Q_F}

\newcommand{\QuotF}{\kmod{\sat{Q}_F} / \kQ[Q_F]}

\newcommand{\Pc}{\mathcal{P}}
\newcommand{\Fc}{\mathcal{F}}

\DeclareMathOperator{\dep}{depth}
\newcommand{\sdep}{\,^*\!\!\dep}
\newcommand{\sdim}{\,^*\!\!\dim}

\DeclareMathOperator{\intt}{int}
\DeclareMathOperator{\Ass}{Ass}
\DeclareMathOperator{\Ann}{Ann}
\DeclareMathOperator{\Supp}{Supp}
\DeclareMathOperator{\supp}{\Supp}

\newcommand{\pp}{\mathfrak{p}}

\newcommand{\xx}{\mathbf{x}}
\newcommand{\mm}{\mathfrak{m}}
\newcommand{\lochom}[1]{H_\mm^{#1}(\kQ)}

\newcommand{\FF}[1]{\mathcal{F}_{#1}}

\newcommand{\nab}[1][q]{\nabla({#1})}
\newcommand{\nabv}[1][q]{\nab[#1]^\vee}
\newcommand{\snab}[1][q]{\sat{\nabla}({#1})}
\newcommand{\snabv}[1][q]{\snab[#1]^\vee}

\addbibresource{DissBibLatex}
\addbibresource{EdgeRingBibLatex}

%------------------------------------------------------------------------------------------------------------------------------------------------------------------------
\begin{document}
\title{Non-normal affine monoids}
\author{Lukas Katth\"an}
\address{Fachbereich Mathematik und Informatik, Philipps-Universit\"at Marburg}%
\email{katthaen@mathematik.uni-marburg.de}%
%\ead{katthaen@mathematik.uni-marburg.de}%
\thanks{This work was supported by the DAAD}

\date{\today}
\subjclass[2010]{Primary 52B20; Secondary 14M25, 13D45} %
\keywords{monoid algebra; affine monoid; affine semigroup; local cohomology}%

\hypersetup{
pdftitle={Non-normal affine monoids},
pdfauthor={Lukas Katth\"an}
}

\begin{abstract}
	We give a geometric description of the set of holes in a non-normal affine monoid $Q$.
	The set of holes turns out to be related to the non-trivial graded components of the local cohomology of $\kQ$.
	From this, we see how various properties of $\kQ$ like local normality and Serre's conditions $(R_1)$ and $(S_2)$ are encoded in the geometry of the holes.
	A combinatorial upper bound for the depth the monoid algebra $\kQ$ is obtained and some cases where equality holds are identified.
	We apply this results to seminormal affine monoids.
\end{abstract}
\maketitle
%
%------------------------------------------------------------------------------------------------------------------------------------------------------------------------

\section{Introduction}
Let $Q$ be an affine monoid, i.e. a finitely generated submonoid of $\ZZ^N$ for some $N\in \NN$.
Further, let $\sat{Q}$ denote the normalization of $Q$.
In this paper, we give a geometric description of the set of holes $\QoQ$ in $Q$ and relate it to properties of $Q$.
Our first main result is the following.
\begin{thm*}[\Cref{thm:ZerlegungNeu}]

	Let $Q$ be an affine monoid. There exists a (not necessarily disjoint) decomposition
	\begin{equation}\label{eq:zerlegung}
	\QoQ = \bigcup_{i=1}^l (q_i + \gp{F_i}) \cap \cone{Q}
	\end{equation}
	with $q_i \in \sat{Q}$ and faces $F_i$ of $Q$. If the union is irredundant (i.e. no $q_i + \gp{F_i}$ can be omitted), then the decomposition is unique. 
\end{thm*}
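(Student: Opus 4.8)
The plan is to recast the statement in terms of the $\gp{Q}$-graded $\kk$-module $M\defa\kQsat/\kQ$. Since $\sat{Q}$ is module-finite over $Q$, this $M$ is a finitely generated $\gp{Q}$-graded $\kQ$-module, each graded component is at most one-dimensional, and the set of degrees $a$ with $M_a\neq 0$ is exactly $\QoQ$. The engine of the whole argument will be a criterion telling me when an entire translated face-group consists of holes. For a face $F$ of $Q$ write $\pi_F\colon\gp{Q}\to G_F\defa\gp{Q}/\gp{F}$ for the projection. I would first prove, for $q\in\sat{Q}$, the equivalence
\[
(q+\gp{F})\cap\cone{Q}\subseteq\QoQ\iff\pi_F(q)\notin\pi_F(Q).
\]
The point is that every element of $(q+\gp{F})\cap\cone{Q}$ automatically lies in $\gp{Q}\cap\cone{Q}=\sat{Q}$, so the only content is that none of them lies in $Q$; and the coset meets $Q$ precisely when $\pi_F(q)\in\pi_F(Q)$. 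This immediately produces pieces of the required shape $(q_i+\gp{F_i})\cap\cone{Q}$.

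With the criterion in hand I would build the decomposition. For each hole $x\in\QoQ$ the set of faces $F$ with $(x+\gp{F})\cap\cone{Q}\subseteq\QoQ$ is nonempty (it contains $F=\{0\}$) and finite, so I may choose an inclusion-maximal \emph{all-hole coset} through $x$; these maximal cosets visibly cover $\QoQ$. The substantial step is finiteness: that only finitely many inclusion-maximal all-hole cosets occur. I would argue by induction on $\rank\gp{Q}$. By the criterion, the maximal cosets with a fixed nonzero periodicity face $F$ are governed by the projected pair $\pi_F(Q)\subseteq\pi_F(\sat{Q})$ inside the lower-rank lattice $G_F$, to which the inductive hypothesis applies — provided one sets up the induction for intermediate saturated pairs $Q\subseteq Q'\subseteq\sat{Q}$ rather than only for a monoid and its full normalization, since $\pi_F(\sat{Q})$ need not be the normalization of $\pi_F(Q)$. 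The residual cosets with trivial periodicity face $F=\{0\}$ are the genuinely isolated holes, which I would control with the conductor $\{c\in Q:c+\sat{Q}\subseteq Q\}$, or equivalently note that they form the $\mm$-primary part of $M$ and are hence finite because $M$ is Noetherian; complementarily, finiteness of $\Ass M$ guarantees that only finitely many faces $F$ arise at all.

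For the uniqueness of the irredundant decomposition I would establish the lattice lemma that if a coset $(q+\gp{F})\cap\cone{Q}$ is contained in a finite union of such cosets, then it is already contained in one of them. Granting this, the pieces of any irredundant decomposition must coincide with the inclusion-maximal all-hole cosets: each piece has to be inclusion-maximal (otherwise the lemma exhibits it as redundant), and every inclusion-maximal all-hole coset must appear (otherwise it fails to be covered). Since this collection is intrinsic to $Q$, the irredundant decomposition is unique.

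I expect the finiteness step to be the main obstacle, as closing the induction demands exactly the right generality of intermediate pairs together with a careful correspondence between maximal cosets for a face $F$ and holes of the projected pair, plus the separate bound on isolated holes. The lattice lemma behind uniqueness is the secondary technical point: covering one affine coset by finitely many others genuinely relies on the cone truncating each coset and on the fact that distinct periodicity groups $\gp{F_i}$ cannot combine to cover a larger one.
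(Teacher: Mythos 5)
Your plan has the same skeleton as the paper's proof, and it diverges only in how finiteness is established. Your projection criterion is exactly the paper's \ref{lemma:translationel} in different clothing: for $q\in\sat{Q}$ one has $\pi_F(q)\notin\pi_F(Q)$ iff $(q+\gp{F})\cap Q=\emptyset$ iff $\Ann \xx^q\subseteq\pF$, and your inclusion-maximal all-hole cosets are precisely the paper's family $\paare$, i.e.\ the sets $\gp{F}+q$ for which $\pF$ is a \emph{minimal} prime over $\Ann\xx^q$. Your uniqueness argument is also the paper's: the lattice lemma you postulate is \ref{lemma:union}, which the paper proves by reducing to the fact that a rational cone spanning a $\QQ$-vector space cannot lie in a finite union of proper subspaces (\ref{lemma:vrunion}), and the maximality bookkeeping is then the paper's final lemma. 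The genuine difference is the finiteness step, and there the paper is cleaner: it does not induct on $\rank\gp{Q}$, but localizes at $F$ (\ref{lemma:locpaare}), reducing to the case where $F$ is the minimal face, and then identifies the number of cosets with periodicity face $F$ with the *length of $H^0_{\pF}$ of the localized quotient $\Quot$, which is finite because this module is Noetherian and $\pF$-torsion. That is essentially your ``$\mm$-primary part'' fallback applied after localization; done this way it closes the argument in one stroke, needs no intermediate pairs, and yields as a bonus the identification of the occurring faces with $\Ass(\Quot)$ and of the repetitions with multiplicities, which the paper exploits later (\ref{lem:H0}, \ref{prop:ass}).

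Two corrections to the parts you flag as delicate. First, your stated reason for needing intermediate pairs is not the real obstruction: $\pi_F(\sat{Q})$ \emph{is} the saturation of $\pi_F(Q)$ inside $\gp{Q}/\gp{F}$; this follows from \ref{prop:norloccom} together with $Q_F=Q+\gp{F}$, since projecting along $\gp{F}$ is localization at $F$ followed by killing the units $\gp{F}$. The actual difficulty with your induction is that $\gp{Q}/\gp{F}$ may have torsion --- e.g.\ for $Q$ generated by $(2,0),(0,1),(1,1)$ and $F$ the face generated by $(2,0)$ one gets $\gp{Q}/\gp{F}\cong\ZZ/2\oplus\ZZ$ --- so $\pi_F(Q)$ is not an affine monoid and the inductive statement would have to be formulated for monoid pairs inside arbitrary finitely generated abelian groups; localizing instead of projecting, as the paper does, avoids this entirely by staying inside $\gp{Q}$. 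Second, $\set{0}$ is a face of $Q$ only when $Q$ is positive; in general the minimal face $F_0$ of units plays that role (every hole $x$ gives the all-hole coset $x+\gp{F_0}$), and your isolated holes are finite only up to translation by $\gp{F_0}$. Both points are repairable, so your plan can be completed, but as written the finiteness step both overshoots (rank induction is unnecessary) and misidentifies where the care is needed.
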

We call a set $q_i + \gp{F_i}$ from \eqref{eq:zerlegung} a \emph{$j$-*dimensional family of holes}, where $j$ is the *dimension of $F$.
The faces appearing in \eqref{eq:zerlegung} correspond to the associated primes of the quotient $\Quot$. 
The same face may appear several times in \eqref{eq:zerlegung}, in fact, the number of times a face appears equals the multiplicity of the corresponding prime.

The significance of this result is that the decomposition can easily be identified from pictures. Because the decomposition is unique, there is no way to pick a wrong decomposition. 
This can be used to easily read off several properties of $Q$:
\begin{thm*}[\Cref{thm:compos}]
	Let $Q$ be an affine monoid of *dimension $d$. The following holds:
	\begin{itemize}
		\item If $d \geq 2$, then $\sdep \kQ = 1$ if and only if there is a $0$-*dimensional family of holes.
		\item $Q$ is locally normal                      if and only if there is no family of holes of positive *dimension.
		\item $\kQ$ satisfies Serre's condition $(R_1)$ if and only if there is no family of holes of *dimension $d-1$.
		\item $\kQ$ satisfies Serre's condition $(S_2)$ if and only if every family of holes has *dimension $d-1$.
	\end{itemize}
\end{thm*}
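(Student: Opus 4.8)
The plan is to reduce all four equivalences to statements about the localizations $\kQ_\pF$ at the monomial primes. The loci where local normality, $(R_1)$ or $(S_2)$ fail are closed and invariant under the torus action, hence are unions of the orbit closures $V(\pF)$; therefore it suffices to test each condition at the primes $\pF$ as $F$ ranges over the faces of $Q$. Two inputs drive this reduction: the behaviour of the decomposition \eqref{eq:zerlegung} under localization (\ref{prop:locholes}), and the fact that $\kQ_\pF$ is, up to a Laurent-polynomial factor coming from the unit group $\gp{F}$, the monoid algebra $\kk[Q_F/\gp{F}]$ of the reduced localized monoid. Under this identification one has $\dim\kQ_\pF = d - \rank\gp{F} = \sdim\kk[Q_F/\gp{F}]$ and $\dep\kQ_\pF = \sdep\kk[Q_F/\gp{F}]$, while by \ref{prop:locholes} the families of holes of $Q_F/\gp{F}$ are exactly the images of those $q_i + \gp{F_i}$ with $F\subseteq F_i$, the family $q_i+\gp{F_i}$ acquiring *dimension $\rank\gp{F_i} - \rank\gp{F}$.

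The engine is the depth statement, which I would prove first. Write $N \defa \Quot$ for the module of holes and recall that $\kQsat$ is Cohen--Macaulay of *dimension $d$ (Hochster), so $H^i_\mm(\kQsat) = 0$ for $i \neq d$. The long exact local cohomology sequence of $0 \to \kQ \to \kQsat \to N \to 0$ then yields $\lochom{i+1}\cong H^i_\mm(N)$ for $i \le d-2$; in particular, as $d\ge 2$, $\lochom{1}\cong H^0_\mm(N)$. Now $H^0_\mm(N) \neq 0$ if and only if $\mm\in\Ass N$, and since the faces occurring in \eqref{eq:zerlegung} are precisely the associated primes of $N$, this happens exactly when some $F_i = \{0\}$, i.e. when a $0$-*dimensional family exists. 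As $\kQ$ is a positively graded domain we have $\lochom{0}=0$, so $\sdep\kQ = 1$ is equivalent to $\lochom{1}\neq 0$, and the first claim follows. (The implication from a $0$-*dimensional family to $\sdep\kQ=1$ also drops out of \ref{cor:unendlich} with $i=0$.)

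With the depth statement available, the remaining three equivalences follow from the dictionary. For local normality one tests whether $\kQ_\pF$ is normal for every nonzero face $F$; here $\kQ_\pF$ is normal iff $\pF\notin\Supp N$, i.e. iff no family satisfies $F\subseteq F_i$, and since $F$ ranges over all nonzero faces (in particular $F=F_i$ is allowed), local normality is equivalent to $F_i=\{0\}$ for every $i$, that is, to the absence of families of positive *dimension. For $(R_1)$ one only tests the primes with $\dim\kQ_\pF = d - \rank\gp{F} \le 1$; the generic point is regular, and for $\rank\gp{F}=d-1$ the ring $\kQ_\pF$ has dimension one, so it is regular iff normal iff $Q_F/\gp{F}$ has no holes. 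A family with $F\subseteq F_i$ and $\rank\gp{F_i}\le d-1$ forces $F_i=F$, so $(R_1)$ fails exactly when some family has *dimension $d-1$. Finally, for $(S_2)$ one tests $\dep\kQ_\pF \ge \min(2, d-\rank\gp{F})$; the condition is automatic once $d-\rank\gp{F}\le 1$ (a domain of dimension $\le 1$ is Cohen--Macaulay), while for $d-\rank\gp{F}\ge 2$ it demands $\dep\kQ_\pF\ge 2$, which by the depth statement applied to $Q_F/\gp{F}$ fails exactly when $Q_F/\gp{F}$ has a $0$-*dimensional family, i.e. when $F_i=F$ for some family. Since $N$ is a torsion module, every family has *dimension at most $d-1$; hence $(S_2)$ fails iff some family has *dimension at most $d-2$, equivalently $(S_2)$ holds iff every family has *dimension $d-1$.

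The step I expect to be the main obstacle is the reduction itself: identifying $\kQ_\pF$ with $\kk[Q_F/\gp{F}]$ after splitting off the Laurent factor supported on $\gp{F}$, checking that this identification matches both depth and dimension, and confirming via \ref{prop:locholes} that the surviving families are exactly those with $F\subseteq F_i$ with the stated drop in *dimension. Once this dictionary and the torus-invariance of the relevant loci are established, each of the four statements becomes a short deduction from the depth characterization and from the fact that a one-dimensional local domain is regular iff normal.
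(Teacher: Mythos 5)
Your overall strategy is the same as the paper's: first prove the depth criterion via the long exact sequence of $0 \to \kQ \to \kQsatm \to \Quot \to 0$ and the identification of the faces in \eqref{eq:zerlegung} with $\Ass(\Quot)$ (this is exactly how the paper gets it, via \ref{thm:lochomneu} and \ref{prop:H1}), and then deduce the other three statements by localizing at faces and invoking \ref{prop:locholes}. That part of your argument is sound (one small caveat: for non-positive $Q$ the ideal $\mm$ is graded-maximal but not maximal, so ``$H^0_\mm(N)\neq 0$ iff $\mm\in\Ass N$'' needs that associated primes of $\gp{Q}$-graded modules are monomial primes; this is standard and implicit in \ref{thm:decompneu}).

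The genuine gap is exactly the step you flagged: the dictionary $\kQ_{\pF} \sim \kk[\gp{F}] \otimes \kk[Q_F/\gp{F}]$ is false in general, because $\gp{F}$ need not be a saturated subgroup of $\gp{Q}$. Concretely, take $Q \subset \ZZ^2$ generated by $(2,0)$, $(0,1)$, $(1,1)$ and $F = Q \cap (\ZZ\times\{0\}) = \NN\,(2,0)$. Then $\gp{Q} = \ZZ^2$ but $\gp{F} = \ZZ(2,0)$, so $\gp{Q}/\gp{F}$ has $2$-torsion; consequently $Q_F/\gp{F}$ is not cancellative-torsionfree (the classes of $(0,1)$ and $(1,1)$ are distinct but have equal doubles), hence not an affine monoid, and $\kk\{Q_F/\gp{F}\}$ is not even a domain, while $\kQ_{\pF}$ is. Moreover no splitting $Q_F \cong \gp{F}\oplus Q'$ can exist, since it would force $\gp{Q}/\gp{F} \cong \gp{Q'}$ to be torsionfree. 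Note this happens even though $Q$ is positive, so the problem is intrinsic to localization, not to allowing non-positive monoids. All three of your reductions (local normality, $(R_1)$, $(S_2)$) route through this identification, so as written they do not compile. The repair is cheap and is precisely what the paper's setup is designed for: do not quotient out the units. Work with the (non-positive) affine monoid $Q_F$ itself and the *local ring $\kQ_{(\pF)} = \kQ[Q_F]$, and use the Goto--Watanabe facts cited in the paper's footnotes, namely $\dep \kQ_{\pF} = \sdep \kQ[Q_F]$ and $\dim \kQ_{\pF} = \sdim \kQ[Q_F] = \sdim Q - \sdim F$. Then \ref{prop:locholes} applies verbatim to $Q_F$ (its families of holes are the $\gp{F_i}+q_i$ with $F \subseteq F_i$, of *dimension $\sdim F_i - \sdim F$ relative to $Q_F$), your depth criterion applies to $Q_F$ because it was proved without positivity, and every one of your subsequent deductions goes through unchanged --- at which point your proof coincides with the paper's.
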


We also establish a close relation of the local cohomology of $\kQ$ at the *maximal ideal to the families of holes that is summarized in the next result.
\begin{thm*}[\Cref{thm:lochomholes}] % \Cref{cor:loccomneu} and \Cref{cor:unendlich}]
	Let $q \in \gp{Q}$ such that $q \notin - \intt \sat{Q}$.
	If $\lochom{i+1}_q \neq 0$ for some $i$,
	then $q$ is contained in a family of holes of *dimension at least $i$.
	On the other hand, every $i$-*dimensional family of holes contains an element $q \in \gp{Q}$
	such that $\lochom{i+1}_q \neq 0$.
\end{thm*}
The hypothesis that $q \notin - \intt \sat{Q}$ is not an essential restriction, as we will see in the discussion leading to the result above.
This result leads to an upper bound on the *depth of $\kQ$, that is easily computable in many cases.
\begin{thm*}[\Cref{thm:main1}]
	If $Q$ has an $i$-*dimensional family of holes, then the *depth of $\kQ$ is at most $i+1$.
\end{thm*}
This theorem states that a non-normal affine monoid with \qq{few} holes has a low *depth. This is somewhat counterintuitive, because Hochster's Theorem (\cite[Theorem 6.10]{brunsgubel})  states that the absence of holes, i.e. normality, implies maximal *depth.
In small examples, it is often not too difficult to determine the bound given by this theorem geometrically.
This can be easier than to compute the actual *depth algebraically.
In general, the *depth may be strictly smaller than the bound given by \Cref{thm:main1}.
However, in \Cref{thm:main2}, \Cref{thm:disjoint} and \Cref{thm:S2} we identify some special cases where equality holds.
See \Cref{ex:graph} for an application.

\begin{figure}[t]
	\captionsetup[subfloat]{width=4.2cm, singlelinecheck=true}
	\subfloat[The decomposition of \cite{brunsgubel}]{
		\begin{tikzpicture}[scale=0.7, radius=0.1667]
		\draw[<->] (7.5,0) -- (0,0) -- (0,7.5);
		
		\foreach \x in {0,2,3,...,6}
		\foreach \y in {0,2,3,...,6}
			\fill (\x,\y) circle;
		
		\foreach \x in {0,2,3,...,6}
			\draw (\x,1) circle;
		
		\foreach \y in {0,1,...,6}
			\draw (1,\y) circle;

		\foreach \pos in {(0,1),(1,0),(1,1),(1,3),(3,1)}
			\draw[densely dotted] \pos circle[radius=0.3333];
		
		\draw[densely dotted]
			(0.5 ,7   ) --
			(0.5 ,4   ) .. controls (0.5 ,3.8 ) and (0.7 ,3.5 ) ..
			(1   ,3.5 ) .. controls (1.3 ,3.5 ) and (1.4 ,3.2 ) ..
			(1.4 ,3   ) .. controls (1.4 ,2.8 ) and (1.3 ,2.33) ..
			(1   ,2.33) .. controls (0.8 ,2.33) and (0.66,2.2 ) ..
			(0.66,2   ) .. controls (0.66,1.8 ) and (0.8 ,1.66) ..
			(1   ,1.66) .. controls (1.3 ,1.66) and (1.5 ,1.8 ) ..
			(1.5 ,2   ) --
			(1.5 ,7   );
		
		\begin{scope}[cm={0,1,1,0,(0,0)}]
		\draw[densely dotted]
			(0.5 ,7   ) --
			(0.5 ,4   ) .. controls (0.5 ,3.8 ) and (0.7 ,3.5 ) ..
			(1   ,3.5 ) .. controls (1.3 ,3.5 ) and (1.4 ,3.2 ) ..
			(1.4 ,3   ) .. controls (1.4 ,2.8 ) and (1.3 ,2.33) ..
			(1   ,2.33) .. controls (0.8 ,2.33) and (0.66,2.2 ) ..
			(0.66,2   ) .. controls (0.66,1.8 ) and (0.8 ,1.66) ..
			(1   ,1.66) .. controls (1.3 ,1.66) and (1.5 ,1.8 ) ..
			(1.5 ,2   ) --
			(1.5 ,7   );
		\end{scope}
		\end{tikzpicture}
	} %
	\qquad
	\subfloat[Our decomposition]{
		\begin{tikzpicture}[scale=0.7, radius=0.1667]
		\draw[<->] (7.5,0) -- (0,0) -- (0,7.5);
		
		\foreach \x in {0,2,3,...,6}
		\foreach \y in {0,2,3,...,6}
			\fill (\x,\y) circle;
		
		\foreach \x in {0,2,3,...,6}
			\draw (\x,1) circle;
		
		\foreach \y in {0,1,...,6}
			\draw (1,\y) circle;
		
		\draw[densely dotted]
			(0.66,7) -- +(0,-7)

			arc[start angle=180, end angle=360, x radius=0.33, y radius=0.33] -- +(0,7)
			(7, 0.66) -- +(-7,0)
			arc[start angle=270, end angle=90, x radius=0.33, y radius=0.33] -- +(7,0);
		\end{tikzpicture}
	}
	\caption{Different decomposition of the holes of a $2$-dimensional affine monoid}
	\label{fig:2dim}
\end{figure}
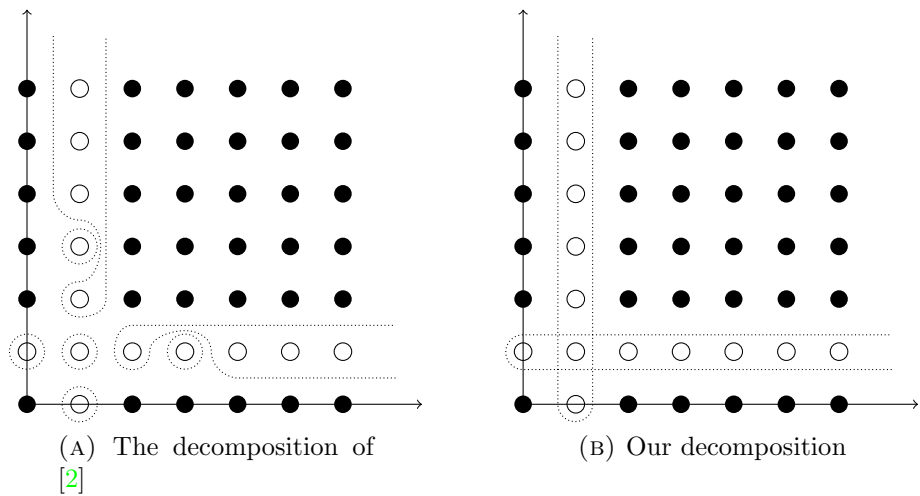
In \cite[Prop. 2.35]{brunsgubel} a different decomposition of the holes is considered.
It is shown in \cite{brunsgubel} that one can always find a decomposition of $\QoQ$ into a disjoint union of translates of faces of $Q$:
\begin{equation}\label{eq:bgzerlegung}
	\QoQ = \bigcup_{i=1}^l q_i + F_i
\end{equation}
In fact, this statement and its proof have been the motivation for proving \Cref{thm:ZerlegungNeu}.
\Cref{fig:2dim} shows an example of both kinds of decompositions.
The decomposition given in \eqref{eq:bgzerlegung} is disjoint, but far from being unique.
Moreover, it seems difficult to extract algebraic information about $Q$ from this decomposition.
 
In the last part of this paper, we apply our results to seminormal affine monoids.
For this class monoids, we give a new proof of the cohomological characterization of seminormality of \cite{blrSeminorm}.
While our proof is not actually simpler than the original one, we believe that it offers a new, more geometric perspective.
Moreover, we extend this and some other results of \cite{blrSeminorm} to the non-positive case.

%--------------------------------------------------------------------------------------------------
%--------------------------------------------------------------------------------------------------
\section{Preliminaries and notation}\label{sec:prelim}
An affine monoid $Q$ is a finitely generated submonoid of the additive monoid $\ZZ^N$ for an $N \in \NN$. 
For general information about affine monoids see \cite{brunsgubel} or \cite{millersturm}.
We denote the group generated by $Q$ by $\gp{Q}$, the convex cone generated by $Q$ by $\cone{Q} \subseteq \RR^N$ and the normalization by $\sat{Q} = \gp{Q} \cap \cone{Q}$. 
Recall that an element $q \in \gp{Q}$ is contained in $\sat{Q}$  if and only if a multiple of $q$ lies in $Q$.
%
%face
A \emph{face} $F \subseteq Q$ of $Q$ is a subset such that for $a,b \in Q$ the following holds:
\[ a+b \in F \qquad \Longleftrightarrow \qquad a,b \in F \]
$Q$ has a unique minimal face $F_0$ that contains exactly the invertible elements.
We call $Q$ \emph{positive} if the only invertible element is $0$.
For every element $q \in Q$, there exists a unique minimal face $F$ containing $q$.
We say that $q$ is an \emph{interior point} of $F$ and write $\intt F$ for the set of interior points of $F$.
Note that by definition $0 \in \intt F_0$.
%
%*dimension
The dimension of a face $F$ is the rank of $\gp{F}$. Since we are working with not-necessarily positive affine monoids, it is more convenient to consider a normalized version of the dimension.
So we define the \emph{*dimension} as $\sdim Q \defa \dim Q - \dim F_0$, and $\sdim F \defa \dim F - \dim F_0$ for every face $F$ of $Q$.
For a field $\kk$, we write $\kQ$ for the monoid algebra of $Q$.
Further, for an element $q \in Q$, we write $\xx^q \in \kQ$ for the corresponding monomial.
For a face $F$ we define $\pF \subseteq \kQ$ to be the vector space generated by those monomials $\xx^q$ such that $q \in Q\setminus F$.
Then $\pF$ is a monomial prime ideal of $\kQ$ and % and all monomial prime ideals are of this type.
 $\kQ/\pF \cong \kQ[F]$.
Thus the ideal $\pp_{F_0}$ associated to the minimal face is the unique maximal graded ideal of $\kQ$ with respect to the natural $\gp{Q}$-grading.
We will sometimes write $\mm$ for this ideal.
Its height equals the maximal length of a descending chain of faces of $Q$, so $(\kQ, \mm)$ is a *local ring of *dimension $\sdim Q$.
More general, the height of $\pF$ equals $\sdim F$ for every face $F$.
%
%TODO *depth
The \emph{*depth} of $\kQ$ is the maximal length of a regular sequence in $\mm$. Equivalently, it is the depth of the (inhomogeneous) localization $\kQ_{\mm}$.
For a face $F$ of $Q$, we denote by
\[ Q_F \defa \set{q-f \with q\in Q, f \in F} \]
the \emph{localization} of $Q$ at $F$. 
It holds that $\kQ[Q_F] = \kQ_{(\pF)}$, where the later is the homogeneous localization of $\kQ$ at $\pF$.
Note that localizations are almost never positive.
The faces of $Q$ are in bijection with the faces of $\sat{Q}$, but as sets they may be different.
Therefore, for a face $F$ of $Q$, we write $\Fsat \defa \set{q \in \sat{Q} \with \exists n\in \NN: nq \in F}$ for the corresponding face of $\sat{Q}$. 
\begin{lemma}\label{prop:norloccom}
	Normalization and localization commute.
	More precisely, if $F \subset Q$ is a face, then it holds that $\sat{(Q_F)} = (\sat{Q})_F$.
	Moreover, it makes no difference if we localize $\sat{Q}$ as a $Q$-module or as an affine monoid on its own:
	$(\sat{Q})_F = (\sat{Q})_{\Fsat}$.
\end{lemma}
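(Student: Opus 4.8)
The plan is to reduce everything to the group and the cone generated by the monoids involved, since the normalization is determined by these two data alone. First I would compute, straight from the definition $Q_F = \set{q - f \with q \in Q,\, f \in F}$, that $\gp{(Q_F)} = \gp{Q}$ and $\cone{(Q_F)} = \cone{Q} - \cone{F}$ (where the latter denotes the Minkowski sum $\cone{Q} + \cone{(-F)}$); both use only that $Q \subseteq Q_F$ and $-F \subseteq Q_F$, the latter because $0 \in Q$. Running the same computation with $\sat{Q}$ in place of $Q$ gives $\gp{((\sat{Q})_F)} = \gp{Q}$ and $\cone{((\sat{Q})_F)} = \cone{Q} - \cone{F}$, since $\sat{Q}$ spans the same group and the same cone as $Q$. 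By the very definition of the normalization this already yields
\[ \sat{(Q_F)} = \gp{Q} \cap (\cone{Q} - \cone{F}), \]
so the first assertion reduces to showing that $(\sat{Q})_F$ equals this same set, i.e. that $(\sat{Q})_F$ is itself normal.

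The inclusion $(\sat{Q})_F \subseteq \gp{Q} \cap (\cone{Q} - \cone{F})$ is immediate from the definitions. The heart of the argument is the reverse inclusion, and this is the step I expect to be the main obstacle. Given $x \in \gp{Q}$ of the form $x = c - d$ with $c \in \cone{Q}$ and $d \in \cone{F}$, I would fix a monoid interior point $f_0 \in \intt F$ (for instance the sum of a generating set of $F$). Since $f_0$ lies in the relative interior of the cone $\cone{F}$, the standard absorption property of cones gives $N f_0 - d \in \cone{F} \subseteq \cone{Q}$ for all sufficiently large $N \in \NN$. Then $x + N f_0 \in \gp{Q}$ (as $x \in \gp{Q}$ and $f_0 \in Q$), while
\[ x + N f_0 = c + (N f_0 - d) \in \cone{Q}; \]
therefore $x + N f_0 \in \gp{Q} \cap \cone{Q} = \sat{Q}$, and writing $x = (x + N f_0) - N f_0$ with $N f_0 \in F$ exhibits $x$ as an element of $(\sat{Q})_F$. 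This completes the first equality.

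For the second equality $(\sat{Q})_F = (\sat{Q})_{\Fsat}$, the inclusion $\subseteq$ follows from $F \subseteq \Fsat$. For the reverse inclusion I would take an element $\sat{q} - g$ with $\sat{q} \in \sat{Q}$ and $g \in \Fsat$. By the definition of $\Fsat$ there is some $n \in \NN$ with $ng \in F$, and then
\[ \sat{q} - g = (\sat{q} + (n-1)g) - ng, \]
where $ng \in F$ and $\sat{q} + (n-1)g \in \sat{Q}$, because $\sat{Q}$ is closed under addition and contains $g$ (as $\Fsat \subseteq \sat{Q}$). Hence $\sat{q} - g \in (\sat{Q})_F$, which finishes the proof. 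The only points worth double-checking here are the two facts built into the definition of $\Fsat$, namely $\Fsat \subseteq \sat{Q}$ and that every $g \in \Fsat$ has a positive multiple in $F$.
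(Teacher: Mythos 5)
Your proof is correct, but it takes a genuinely different route from the paper. The paper disposes of the first equality $\sat{(Q_F)} = (\sat{Q})_F$ in one line by citing the algebraic fact that integral closure commutes with localization (Eisenbud, Prop.~4.13), implicitly using the standard dictionary between monoid localization/normalization and the corresponding operations on $\kQ$. You instead give a self-contained convex-geometric argument: both sides are identified with $\gp{Q} \cap (\cone{Q} - \cone{F})$, the nontrivial inclusion being handled by the absorption trick $Nf_0 - d \in \cone{F}$ for an interior point $f_0$ of $F$ and $N \gg 0$. This is sound; the only step you lean on without proof is that a monoid-interior point of $F$ lies in the relative interior of the cone $\cone{F}$, which is true via the bijection between faces of $Q$ and faces of $\cone{Q}$ (and could even be avoided by writing $d$ as a non-negative combination of monoid generators $f_i$ of $F$ and taking $f_0 = \sum_i f_i$, $N \geq \max_i \lambda_i$). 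What the paper's approach buys is brevity and the full strength of the ring-theoretic statement; what yours buys is independence from commutative algebra and the explicit description $\sat{(Q_F)} = \gp{Q} \cap (\cone{Q} - \cone{F})$, which is of independent interest. For the second equality the two arguments are essentially the same elementary observation, phrased differently: the paper localizes at an interior point of $\Fsat$ lying in $F$, while you replace $g \in \Fsat$ by a positive multiple $ng \in F$; both reduce to the fact that every element of $\Fsat$ has a positive multiple in $F$.
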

\begin{proof}
	The equality $\sat{(Q_F)} = (\sat{Q})_F $ follows from the corresponding algebraic statement, see \cite[Prop. 4.13]{eisenbud}.
	Further, $(\sat{Q})_F = (\sat{Q})_{\Fsat}$, because $F$ contains an interior point of $\Fsat$.
\end{proof}

%Support
For a $\gp{Q}$-graded $\kQ$-module $N$, the \emph{support} of $N$, $\supp N$, is defined to be the set of those $q \in \gp{Q}$, such that there exists an element of degree $q$ in $N$.
\begin{definition}
Let $Q$ be an affine monoid. We call $Q$ \emph{locally normal} if every localization $Q_F$ at a face $F \neq F_0$ is normal.
\end{definition}
Since localizations of normal affine monoids are again normal, it is enough to consider faces of *dimension $1$. 
Note that a polytopal affine monoid is locally normal if and only if it is very ample.

%cross section polytope
Finally, a \emph{cross section polytope} of a polyhedral cone $C \subset \RR^d$ of a polytope whose face lattice is isomorphic to the face lattice of $C$.
Such a polytope always exists and can be obtained by intersection $C$ with a suitable affine subspace. 
Note that if $Q$ is an affine monoid, then its face lattice is isomorphic to the face lattice of a cross section polytope of $\cone{Q}$.

%----------------------------------------------------------------------------------------------
%----------------------------------------------------------------------------------------------
\section{The structure of the set of holes}
In this section, we describe the structure of the set of holes $\QoQ$.
Following an idea from \cite[p. 139]{brunsgubel}, we consider more generally graded subquotients of $\kQ[\gp{Q}]$.

\begin{theorem}\label{thm:ZerlegungNeu}
Let $U \subseteq M \subseteq \kQ[\gp{Q}]$ be finitely generated graded $\kQ$-modules and let $N \defa M / U$. For each $\pp \in \Ass N$, let $r_\pp$ denote its multiplicity on $N$ and let $F_\pp$ denote the corresponding face of $Q$. Then
\begin{equation}\label{eq:zerlegungVoll}
	\supp N = \bigcup_{\pp \in \Ass N} \bigcup_{i=1}^{r_\pp} \gp{F_\pp} + q_{\pp,i} \cap \supp M
\end{equation}
for elements $q_{\pp,i} \in \gp{Q}$. This union is irredundant and it is unique (as a union of sets).
\end{theorem}
The most important special case for this paper is of course $M = \kQsat$ and $U = \kQ$, where $\Supp N = \QoQ$.
In this case, we will call a set $\gp{F} + q$ in \eqref{eq:zerlegungVoll} a \emph{family of holes}.
Another noteworthy special case is $Q = \NN^n, M = \kQ = \kk[x_1, \dotsc, x_n]$ and $U$ a squarefree monomial ideal.
If $\Delta$ denotes the simplicial complex corresponding to $U$, the faces $F$ in \eqref{eq:zerlegungVoll} correspond to the facets of $\Delta$ and thus \eqref{eq:zerlegungVoll} corresponds to the primary decomposition of its Stanley-Reisner ideal $U$.

The decomposition of \Cref{thm:ZerlegungNeu} behaves nicely under localization. For simplicity, we state this only in a special case:
\begin{corollary}\label{prop:locholesNeu}
	Let $F$ be a face of $Q$.
	The families of holes of $Q_F$ are exactly those families of holes $\gp{G} + q $ of $Q$ which satisfy $F \subset G$.
	In particular, $Q_F$ is normal if and only if there is no family of holes $\gp{G}+q$ with $F \subseteq G$. % no associated face contains $F$.
\end{corollary}
We will prove the corollary below.
Before giving the proof of \Cref{thm:ZerlegungNeu}, we prepare two lemmata.
First, we have a variant of the well-known fact that a vector space over an infinite field cannot be written as a union of finitely many subspaces.
\begin{lemma}\label{lemma:vrunion}
	Let $V$ be a vector space over $\QQ$ and $C \subseteq V$ be a convex cone. 
	If $C$ contains a generating set of $V$, then it is not contained in any finite union of proper subspaces of $V$. 
\end{lemma}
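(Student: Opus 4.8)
The plan is to reduce to the classical fact that a vector space over an infinite field cannot be written as a finite union of proper subspaces, which applies since $\QQ$ is infinite. Suppose for contradiction that $C \subseteq \bigcup_{i=1}^m W_i$ for proper subspaces $W_i \subsetneq V$, and fix a generating set $G \subseteq C$ of $V$. Since each $W_i$ is proper and $G$ spans $V$, for every $i$ there is an element $g_i \in G$ with $g_i \notin W_i$ (otherwise $G \subseteq W_i$ would force $W_i = V$). I would then pass to the finite-dimensional subspace $V_0$ spanned by $g_1, \dots, g_m$. By construction $g_i \in V_0 \setminus W_i$, so each $W_i' \defa W_i \cap V_0$ is a \emph{proper} subspace of $V_0$; moreover $C_0 \defa C \cap V_0$ is again a convex cone, it satisfies $C_0 \subseteq \bigcup_i W_i'$, and it contains the spanning vectors $g_1, \dots, g_m$. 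This reduces the problem to the finite-dimensional cone $C_0 \subseteq V_0$.

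Next I would adapt the standard line-sweeping argument to the cone. Set $c \defa g_1 + \cdots + g_m \in C_0$. This point has the key property that it can be perturbed in \emph{every} direction while staying inside $C_0$: for an arbitrary $v \in V_0$, writing $v = \sum_i a_i g_i$ with $a_i \in \QQ$ gives $c + t v = \sum_i (1 + t a_i) g_i$, and all these coefficients are nonnegative for every rational $t$ with $0 \leq t \leq t_0$, where $t_0 > 0$ depends on $v$; hence $c + tv \in C_0$ for such $t$, since $C_0$ is a convex cone containing the $g_i$. Now the classical fact applied to $V_0$ yields, because $\QQ$ is infinite and the $W_i'$ are proper, a vector $v \in V_0 \setminus \bigcup_i W_i'$. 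Consider the infinitely many points $c + tv$ with $t \in [0,t_0] \cap \QQ$; each lies in $C_0 \subseteq \bigcup_i W_i'$, so by the pigeonhole principle some $W_j'$ contains two of them, say $c + t_1 v$ and $c + t_2 v$ with $t_1 \neq t_2$. Subtracting gives $(t_1 - t_2) v \in W_j'$, whence $v \in W_j'$, contradicting the choice of $v$.

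I expect the main obstacle to be precisely the departure from the classical argument: a cone is not closed under negation, so one cannot sweep an arbitrary line through it. The decisive observation is that a cone containing a spanning set is full-dimensional, so from the point $c$ one may move in \emph{any} direction and remain inside the cone for a short positive parameter range. The explicit identity $c + tv = \sum_i (1 + t a_i) g_i$ makes this entirely rational and elementary, so no appeal to the topology of $\RR$ is needed.
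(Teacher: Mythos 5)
Your proof is correct, and it takes a genuinely different (though related) route from the paper's. The paper gives a self-contained argument: after passing to an irredundant union $V_1 \cup \dots \cup V_l$, it chooses $x_1, x_2 \in C$ with $x_i \in V_i \setminus \bigcup_{j\neq i} V_j$ and sweeps the ray $\lambda x_1 + x_2$, $\lambda \in \QQ$, $\lambda \geq 0$, which stays in $C$ because the sweep \emph{direction} $x_1$ is itself an element of the cone; each $V_i$ with $i \geq 2$ can catch at most one point of the ray, so some point lands in $V_1$, forcing $x_2 \in V_1$, a contradiction. You instead invoke the classical theorem (a vector space over an infinite field is not a finite union of proper subspaces) as a black box to obtain a sweep direction $v$ avoiding all the subspaces; since $v$ need not lie in the cone, you compensate by anchoring the sweep at $c = g_1 + \cdots + g_m$, which your identity $c + tv = \sum_i (1 + t a_i) g_i$ shows is \emph{interior} in the rational sense: from $c$ one can move in any direction of $V_0$ and remain in the cone for small rational $t \geq 0$. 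Pigeonhole on the segment then puts two points in one subspace, whence $v$ lies in it, a contradiction. Your version buys modularity --- the cone-specific content is isolated in the full-dimensionality of $c$, and the paper's irredundancy bookkeeping disappears --- while the paper's version buys brevity and self-containedness: it needs neither the classical theorem nor your reduction to the finite-dimensional $V_0$ (a reduction that is only forced on you by quoting the classical fact; the sweeping mechanism itself works in arbitrary dimension). Both arguments are purely rational and elementary, as you note.
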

\noindent The proof is similar to the statement about vector spaces. Next we need a discrete analogue of the preceding lemma.
\begin{lemma}\label{lemma:union}
	Let $q, p_1, \dotsc, p_l \in \gp{Q}$ be lattice points and let $F, G_1, \dotsc, G_l$ be (not necessarily distinct) faces of $Q$. If $F + q$ is contained in the union $\bigcup_i \gp{G_i} + p_i$, then it is already contained in one of the sets $\gp{G_i} + p_i$ and it holds that $F \subseteq G_i$ for that index $i$.
\end{lemma}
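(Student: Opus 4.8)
The plan is to reduce the assertion to showing that a \emph{single} coset $\gp{G_i}+p_i$ contains the whole translate $F+q$, and to base everything on the following containment criterion. Since $0\in F$ and $F$ generates $\gp{F}$, a direct check shows that
$F+q\subseteq\gp{G_i}+p_i$ holds if and only if $\gp{F}\subseteq\gp{G_i}$ and $q-p_i\in\gp{G_i}$. First I would record this, together with its consequence that \emph{whenever} $\gp{F}\subseteq\gp{G_i}$, the coset $\gp{G_i}+p_i$ either misses $F+q$ entirely or contains all of it (if it meets $F+q$ in a point $f+q=g+p_i$, then $q-p_i=g-f\in\gp{G_i}$ because $f\in F\subseteq\gp{G_i}$, and then every $f'+q=p_i+(f'+(q-p_i))$ lies in the coset).

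The geometric crux is a structural fact about faces: if $\mathrm{span}_\RR F\subseteq\mathrm{span}_\RR G$ for faces $F,G$ of $Q$, then already $F\subseteq G$, hence $\gp{F}\subseteq\gp{G}$. This holds because every face of the cone $\cone{Q}$ equals the intersection of $\cone{Q}$ with its own linear span, so span containment of the cone faces forces $F_{\mathrm{cone}}\subseteq G_{\mathrm{cone}}$ and therefore $F=Q\cap F_{\mathrm{cone}}\subseteq Q\cap G_{\mathrm{cone}}=G$. I expect this to be the main obstacle, because the naive expectation that $\gp{F}$ is saturated in $\gp{Q}$ is false; what saves the argument is that two faces with the same linear span must coincide, so the failure of saturation can never produce \emph{incomparable} groups $\gp{F},\gp{G_i}$ sharing the same $\QQ$-span. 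The payoff is that for every index $i$ with $\gp{F}\not\subseteq\gp{G_i}$, the subspace $W_i\defa(\gp{F}\otimes\QQ)\cap(\gp{G_i}\otimes\QQ)$ is a \emph{proper} subspace of $V\defa\gp{F}\otimes\QQ$.

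Next I would argue by contradiction, assuming that no coset contains $F+q$. By the criterion above, every index with $\gp{F}\subseteq\gp{G_i}$ then contributes a coset disjoint from $F+q$, so $F+q$ must be covered by the cosets with $\gp{F}\not\subseteq\gp{G_i}$. Intersecting this covering with the affine space $(\gp{F}\otimes\QQ)+q$ and translating by $-q$, I obtain that the monoid $F$, which spans $V$, is contained in a finite union of proper affine subspaces $W_i+t_i$ of $V$.

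Finally I would rule this out by a pencil argument in the spirit of \ref{lemma:vrunion}. Applying that lemma to the rational convex cone generated by $F$ (which contains the generating set $F$ of $V$) and to the finitely many proper subspaces $W_i$, I obtain a point of that cone avoiding all $W_i$; clearing denominators—which keeps the point in the submonoid $F$ and does not move it into any subspace—yields an element $x_1\in F$ with $x_1\notin W_i$ for every $i$, and in particular $x_1\neq 0$. Choosing any $x_2\in F$, the entire pencil $\set{nx_1+x_2\with n\in\NN}$ lies in $F$ because $F$ is a submonoid. Since $x_1\notin W_i$, no affine subspace $W_i+t_i$ can contain the line through $x_2$ with direction $x_1$, so each $W_i+t_i$ meets the pencil in at most one point; a finite union then covers only finitely many of the infinitely many pencil points, a contradiction. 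Hence some coset must contain $F+q$, which is the claim.
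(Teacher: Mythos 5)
Your proof is correct and takes essentially the same route as the paper's: the same key structural fact that $\QQ$-span containment of faces forces face containment (making the subspaces $W_i = \QQ F \cap \QQ G_i$ proper exactly when $\gp{F} \not\subseteq \gp{G_i}$), the same use of \ref{lemma:vrunion} to produce a generic element of $F$, and the same pencil/pigeonhole argument along a ray in $F$ meeting each coset at most once. The differences (your explicit containment criterion and the translation to a covering of $F$ by affine translates of proper subspaces, rather than working directly with the cosets meeting $F+q$) are organizational rather than mathematical.
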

Note that this Lemma does not hold for arbitrary subgroups of $\ZZ^N$, for example $\ZZ = 2\ZZ \cup (2\ZZ + 1)$.
\begin{proof}
	We may assume that $F + q$ has a non-empty intersection with every $\gp{G_i} + p_i$ for $1\leq i \leq l$. 
	If $F \subseteq G_i$ for any $i$, then $F + q \subset \gp{F} + q' \subset \gp{G_i} + q' = \gp{G_i} + p_i$ for $q' \in F + q \cap \gp{G_i} + p_i$. Thus in this case our claim holds. We will show that there exists always an $i$ such that $F \subseteq G_i$.
	
	Assume that $F \nsubseteq G_i$ for every $i$. As a notation, for a subset $S \subset \QQ^N$, we write $\QQ S$ for the $\QQ$-subspace generated by $S$.
	Then, $\QQ(\gp{F} \cap \gp{G_i}) \subsetneq \QQ F$ for every $i$. Indeed, it holds that $\QQ(\gp{F} \cap \gp{G_i}) \subseteq \QQ F \cap \QQ G_i \subseteq \QQ F$. The second inclusion is strict except in the case that $\QQ F \subseteq \QQ G_i$. But this would imply that $F \subseteq G_i$, because $F = \QQ F \cap Q$ and $G_i = \QQ G_i \cap Q$. Here we use that $F$ and $G_i$ are faces of a common affine monoid.
	
	By \Cref{lemma:vrunion}, we can find an element $\hat{p}$ in the cone generated by $F$ that is not contained in any $\QQ(\gp{F} \cap \gp{G_i})$. By multiplication with a positive scalar, we can assume $\hat{p} \in F$. For every non-negative integer $\lambda$, it holds that $\lambda \hat{p} + q \in F + q \subset \bigcup_i \gp{G_i} + p_i$. Since the union is finite, there exists an index $i$ and two different integers $\lambda, \lambda' \in \ZZ$ such that $\lambda \hat{p} + q,\lambda' \hat{p} + q \in \gp{G_i} + p_i$.
	But now it follows that $(\lambda - \lambda')\hat{p} \in \gp{F} \cap \gp{G_i}$ and thus $\hat{p} \in \QQ(\gp{F} \cap \gp{G_i})$, a contradiction to our choice of $\hat{p}$.
\end{proof}

\begin{proof}[Proof of \Cref{thm:ZerlegungNeu}]
\textbf{The 1\textsuperscript{st} step.}
	We start by proving the following fact: If $\pp \subset \kQ$ is a homogeneous prime ideal and $q \in \supp N_{(\pp)} \cap \supp M$, then the natural map $(M)_q \rightarrow (N)_q \rightarrow (N_{(\pp)})_q$ is an isomorphism of ($1$-dimensional) $\kk$-vector spaces.
	Here, $(M)_q$ denotes the graded component of $M$ in degree $q$.

	To see this, let $0 \neq \frac{n}{s} \in N_{(\pp)}$ be a homogeneous element of degree $q$. % \in \supp N_{(\pp)} \cap \supp M $.
	Note that all graded components of $M$ are $1$-dimensional, therefore $n \neq 0$ implies that $\deg n \notin \supp U$.
	There exists an element $0 \neq m \in M$ of degree $q$ such that $s m + U = n$. Since $\deg (s m) = \deg n \notin \supp U$, it follows that $\deg m = q \notin \supp U$.
	So $n' \defa m + U$ does not go to zero in the quotient $N = M/U$. Moreover, $s \frac{n'}{1} = \frac{s n'}{1} = \frac{n}{1} \neq 0$ in $N_\pp$, hence $\frac{n'}{1} \neq 0$.

\textbf{The 2\textsuperscript{nd} step.}
	First, we proof the following preliminary decomposition:
	\begin{equation}\label{eq:zerlegungH0}
		\supp N = \bigcup_{\pp \in \Ass N} \supp H^0_\pp(N_{(\pp)}) \cap \supp M
	\end{equation}
	\begin{itemize}
	\item[\qq{$\subseteq$}:] 
		Let $m \in N$ be a homogeneous element and let $\pp$ be a minimal prime over the annihilator of $m$.
		Then $\pp$ is an associated prime of $N$, because it is a minimal prime of the submodule generated by $m$.
		Moreover, $m$ does not go to zero in the localization $N_{(\pp)}$.
		Since $m \in H^0_\pp(N_{(\pp)})$, the degree of $m$ is contained in the right hand side of \eqref{eq:zerlegungH0}.

\item[\qq{$\supseteq$}:]
	For $\pp \in \Ass N$ and $q \in \supp H^0_\pp(N_{(\pp)}) \cap \supp M$ it follows immediately from the first step that $(N)_q \neq 0$.
\end{itemize}

\textbf{The 3\textsuperscript{rd} step.}
	Now fix a $\pp \in \Ass N$ and let $F \defa F_\pp$ and $r \defa r_\pp$. We show that there are $q_1, \dotsc, q_r \in \supp N$ such that
	\begin{equation*}%\label{eq:zerlegungfin}
		\supp H^0_\pp(N_{(\pp)}) = \bigcup_{i=1}^r \gp{F} + q_i.
	\end{equation*} 
	The module $V \defa H^0_\pp(N_{(\pp)})$ is of finite *length $r$ over the *local ring $\kQ_{(\pp)} = \kQ[Q_F]$, so there exists a *composition series
	$ 0 = V_0 \subset V_1 \subset \dots \subset V_r = V $ 
	such that $V_i / V_{i-1} \cong \kQ[Q_F] / \pF (-q_i) = \kQ[\gp{F}](-q_i)$ for $1 \leq i \leq r$ and certain $q_i \in \supp V$.
	Here, $(.)(-q_i)$ denotes a shift in the grading by $q_i$. Hence
	\[ \supp V = \bigcup_{i=1}^r \supp \kQ[\gp{F}](-q_i) = \bigcup_{i=1}^r \gp{F} + q_i. \]

\textbf{The 4\textsuperscript{th} step.}
	Finally, we show that the union in \eqref{eq:zerlegungVoll} is unique and thus irredundant.
	For this, we consider any decomposition of $\supp N$ of the form
	\begin{equation}\label{eq:second}
		\supp N = \bigcup_j \gp{G_i} + p_i \cap \supp M
	\end{equation}
	and show that each set $\gp{F} + q$ of \eqref{eq:zerlegungVoll} equals one of the sets in \eqref{eq:second}.
	
	So fix a set $\gp{F} + q$ occurring in \eqref{eq:zerlegungVoll}.
	By applying \Cref{lemma:union} twice, we see that there exists an index $i_0$ and a second set $\gp{F'} + q'$ in \eqref{eq:zerlegungVoll} such that $\gp{F} + q  \subseteq \gp{G_{i_0}} + p_{i_0} \subseteq \gp{F'} + q'$ and $F \subseteq G_{i_0} \subseteq F'$. 

	Let $0 \neq n \in N$ be an element such that $\deg n \in \gp{F} + q$.
	By the first step, the image of $n$ is nonzero in both $N_{(\pp_F)}$ and $N_{(\pp_{F'})}$.
	Recall that $\gp{F} + q \subset\supp H^0_{\pp_F}(N_{(\pp_F)})$ by our construction of \eqref{eq:zerlegungVoll} via \eqref{eq:zerlegungH0}.
	Because each homogeneous component of $N_{(\pF)}$ is one-dimensional, it follows that $n \in H^0_{\pp_F}(N_{(\pp_F)})$ and $\pp_F$ is thus a minimal prime over the annihilator of $n$ in $N$ (cf. \cite[2.19]{eisenbud}). But by the same argument $\pp_{F'}$ is also a minimal prime over the annihilator. Since $\pp_{F'} \subseteq \pp_F$, it follows that $\pp_{F'} = \pp_F$.
	But this already implies that $\gp{F} +q = \gp{G_{i_0}} + p_{i_0} = \gp{F'} + q'$, so the claim is proven.
\end{proof}

Note that in the second step above proof, we do not use the assumption $M \subseteq \kQ[\gp{Q}]$ for the  inclusion \qq{$\subseteq$}. However, for the other inclusion this assumption is crucial. For example, consider $Q = \NN_0$, $M = \kQ e_1 \oplus \kQ(-1) e_2$ and $U = \kQ e_1$, where $e_1$ and $e_2$ are free generators. Then $\supp(M/U) = \set{1,2,\dotsc}$, while $\supp H^0_{(0)}((M/U)_{((0))}) \cap \supp M =  \set{0,1,2,\dotsc}$.

\begin{proof}[Proof of \Cref{prop:locholesNeu}]
Let $N \defa \Quot$.
The associated primes of $N_{(\pF)}$ are exactly those associated primes of $N$ that are contained in $\pF$. Moreover, for such a prime $\pp \in \Ass N_{(\pF)}$, it holds that $H_\pp^0(N_{(\pp)}) = H_\pp^0((N_{(\pF)})_{(\pp)})$. So the claim is immediate from the proof of \ref{thm:ZerlegungNeu}.
\end{proof}

\section{Local cohomology and holes}
In this section, we consider the local cohomology of the monoid algebra $\kQ$ with support on the maximal graded ideal $\mm \defa \pp_{F_0}$. Recall that the local cohomology can be computed by the Ishida complex \cite{ishida} as follows:
Consider the $\gp{Q}$-graded complex
\[ \mho_Q:\, 0 \rightarrow \kQ \rightarrow \bigoplus_{F \in \FF{1}} \kQ[Q_F] \rightarrow \cdots 
\rightarrow \bigoplus_{F \in \FF{d-1}} \kQ[Q_F] \rightarrow \kQ[\gp{Q}] \rightarrow 0 \]
where $\FF{i}$ denotes the set of $i$-*dimensional faces of $Q$.
The maps are given by $\delta_i: \kQ[Q_F] \ni \xx^q \mapsto \sum_{G \supset F} \epsilon(F,G) \xx^q$ via the canonical inclusion $\kQ[Q_F] \rightarrow \kQ[Q_G]$ for $F \subset G$, and $\epsilon(F,G)$ is an appropriate sign function.
See \cite[Section 13.3]{millersturm} for the exact definition.
The cohomological degrees are chosen such that the modules $\kQ$ and $\kQ[\gp{Q}]$ sit in degree $0$ respectively $d$.

\begin{theorem}[Thm. 13.24, \cite{millersturm}]
	The local cohomology of any $\kQ$-module $M$ supported on $\mm$ is the cohomology of the Ishida complex tensored with $M$:
	\[ H_\mm^i(M) \cong H^i(M \otimes \mho_Q) \]
	The isomorphism respects the $\gp{Q}$-grading.
\end{theorem}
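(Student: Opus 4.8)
The plan is to prove this standard comparison theorem by the usual homological‑algebra argument, carried out in the category of $\gp{Q}$‑graded $\kQ$‑modules so that the grading is respected automatically. The starting point is that every term $\kQ[Q_F] = \kQ_{(\pF)}$ of the Ishida complex $\mho_Q$ is a localization of $\kQ$, hence a flat $\kQ$‑module. Consequently $\mho_Q \otimes_\kQ -$ is exact and $M \mapsto H^i(\mho_Q \otimes M)$ is a cohomological $\delta$‑functor. First I would identify the degree‑zero cohomology: an element $m \in M$ lies in $\ker\bigl(M \to \bigoplus_{F \in \FF{1}} M_{(\pF)}\bigr)$ precisely when, for every ray $F$, some interior monomial $\xx^f$ with $f \in \intt F$ annihilates a power of it; since the ideal generated by one such monomial per ray has radical $\mm$, this kernel is exactly the $\mm$‑torsion submodule $\Gamma_\mm(M) = H^0_\mm(M)$.

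To promote this to all cohomological degrees, I would compare the two spectral sequences of the double complex $\mho_Q \otimes I^\bullet$, where $M \to I^\bullet$ is a $\gp{Q}$‑graded injective resolution. Taking cohomology in the $I^\bullet$‑direction first, flatness of each $\mho_Q^j$ makes every column compute $\mho_Q^j \otimes M$ in degree $0$ and nothing else, so one filtration collapses to $H^i(\mho_Q \otimes M)$. Taking cohomology in the $\mho_Q$‑direction first, the claim to establish is that for an injective module $I$ the complex $\mho_Q \otimes I$ has cohomology $\Gamma_\mm(I)$ in degree $0$ and none in positive degrees; granting this, the second filtration collapses to $H^i(\Gamma_\mm I^\bullet) = H^i_\mm(M)$, and the comparison yields the desired graded isomorphism.

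The main obstacle is exactly this vanishing on injectives. Using that $\kQ$ is Noetherian and that its $\gp{Q}$‑graded primes are precisely the monomial primes $\pF$ (as recalled in \ref{sec:prelim}), the structure theory of graded injectives (cf. \cite{gotowata}) reduces the problem to $I = {}^{*}E(\kQ/\pF)$ for a single face $F$. If $F = F_0$, so $\pF = \mm$, then $I$ is $\mm$‑torsion and every localization $I_{(\pp_G)}$ at a face $G \neq F_0$ vanishes, since inverting the non‑unit $\xx^g$ annihilates an $\mm$‑torsion module; hence $\mho_Q \otimes I$ collapses to its degree‑zero term $I = \Gamma_\mm(I)$, as required. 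If $F \neq F_0$, then $\xx^f$ acts invertibly on $I$ for $f \in \intt F$, and I must show $\mho_Q \otimes I$ is exact; this follows once the localized complex $\mho_Q \otimes \kQ[Q_F]$ is shown to be contractible. Reindexing its terms by sending a face $G$ to the smallest face $G \vee F$ containing $G$ and $F$ splits the complex into summands indexed by the faces $H \supseteq F$, each of the form $\kQ[Q_H]$ tensored with the reduced cochain complex of the poset of faces $G$ with $G \vee F = H$. The acyclicity of these summands is the genuinely combinatorial heart of the argument: it is the discrete analogue of the fact that a \v{C}ech complex becomes contractible as soon as one of its localizing elements is inverted, and it amounts to the contractibility of the relevant interval of the face lattice after coning off by $F$. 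This is the step I expect to require the most care.
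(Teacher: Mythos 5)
The paper does not actually prove this statement: it is quoted, with citation, from \cite{millersturm} (Thm.\ 13.24), so there is no internal proof to compare against, and your proposal has to stand on its own. Its skeleton is the standard one from the literature (flat terms $\Rightarrow$ $\delta$-functor, identification of $H^0$ with $\Gamma_\mm$, vanishing on injectives via the structure theory of graded injectives), which is essentially how Bruns--Herzog and Miller--Sturmfels argue. Your first three steps are sound, up to one garbled phrase: an element $m$ dies in $M_{(\pF)}$ iff $\xx^f m = 0$ for some $f \in F$ (not ``$\xx^f$ annihilates a power of it''); with that correction, the radical computation works because every face of *dimension $\geq 1$ contains a ray, so $\mm$ is the only monomial prime containing one interior monomial per ray, and since graded primes of $\kQ$ are exactly the $\pF$, the Goto--Watanabe reduction to $I = {}^*E(\kQ/\pF)$ and the case $F = F_0$ are also fine.

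There are two genuine problems in the final step. First, the claimed direct-sum splitting of $\mho_Q \otimes \kQ[Q_F]$ by $H = G \vee F$ is false as stated: the Ishida differential sends the $G$-term $\kQ[Q_{G \vee F}]$ to $G'$-terms with $G' \vee F$ possibly strictly larger than $G \vee F$, so reindexing by the join gives only a \emph{filtration} (e.g.\ by $\sdim (G\vee F)$), not a decomposition. This is repairable --- the associated graded of that filtration is exactly the direct sum you describe, and exactness of all graded pieces of a finite filtration forces exactness of the total complex --- but the splitting claim itself would not survive. Second, the acyclicity of those pieces, which you correctly call the heart of the argument, is left entirely open, and without it there is no proof. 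It is true, and here is how to close it: for $H \supseteq F$, the fiber $\set{G \with G \vee F = H}$ is the complement, inside the face lattice of $H$, of the set of faces $G \subsetneq H$ with $G \vee F \subsetneq H$; the latter is precisely the union of the face lattices of the facets of $H$ containing $F$, i.e.\ the polyhedral subcomplex of a cross-section polytope of $H$ given by all facets containing the face $\bar F$ corresponding to $F$. Since $F \neq F_0$, the face $\bar F$ is nonempty, each such facet is convex and contains $\bar F$, so this union is star-shaped with respect to any point of the relative interior of $\bar F$, hence contractible; the long exact sequence of the pair (ball, contractible subcomplex) then shows the fiber complex is exact. Equivalently, one can argue degreewise exactly as the paper does elsewhere via \eqref{eq:nabla}: in each degree $q \in \gp{Q}$, the localized complex is the cellular cochain complex of a subcomplex $\nabv$ for which $G \in \nab$ if and only if $G \vee F \in \nab$, a complex ``coned off by $F$'' and therefore acyclic. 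Until one of these arguments is supplied, your proposal is an accurate plan, not a proof.
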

\noindent We use the Ishida complex to relate the local cohomology of $\kQ$ to the local cohomology of $\Quot$.
\begin{theorem}\label{thm:lochomneu}
	Let $Q$ be an affine monoid of *dimension $d$, $i \leq d$ and integer and let $q \in \gp{Q}$.
	Then the following holds:
	\begin{enumerate}
		\item If $i < d$, then $\lochom{i} \cong H_{\mm}^{i-1}(\Quot) \,.$
		\item If $i=d$ and $q \notin -\intt \sat{Q}$, then $\lochom{i}_q \cong H_{\mm}^{i-1}(\Quot)_q$ as $\kk$-vector spaces.
		\item If $q \in -\intt \sat{Q}$, then
		\[
		\lochom{i}_q = %
		\begin{cases}
		\kk &\text{ if } i = d\,, \\
		0 &\text{ otherwise.}
		\end{cases}
		\]
	\end{enumerate}
\end{theorem}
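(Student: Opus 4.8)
The plan is to read off all three statements from the long exact sequence in local cohomology attached to the short exact sequence of $\gp{Q}$-graded $\kQ$-modules
\[ 0 \longrightarrow \kQ \longrightarrow \kQsatm \longrightarrow \Quot \longrightarrow 0 \,, \]
namely
\[ \cdots \to H_\mm^{i-1}(\kQsatm) \to H_\mm^{i-1}(\Quot) \to \lochom{i} \to H_\mm^i(\kQsatm) \to \cdots \,. \]
The one external input I need is the local cohomology of the normalization. Since $\sat{Q}$ is a normal affine monoid, $\kQsat$ is *Cohen--Macaulay of *dimension $d$ by Hochster's theorem (\cite[Theorem 6.10]{brunsgubel}); moreover $\kQsat$ is a finite $\kQ$-module, so $H_\mm^i(\kQsatm)$ agrees with the local cohomology of $\kQsat$ over itself and hence vanishes for $i \neq d$. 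For the top degree I would invoke the Danilov--Stanley description of $H_\mm^d(\kQsatm)$ as the graded dual of the canonical module (see \cite{brunsgubel}): its component in degree $q$ is $\kk$ if $q \in -\intt\sat{Q}$ and $0$ otherwise.

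With these facts, parts (1) and (2) are bookkeeping in the sequence. If $i < d$, then both neighbours $H_\mm^{i-1}(\kQsatm)$ and $H_\mm^i(\kQsatm)$ vanish, so the connecting map is a graded isomorphism $H_\mm^{i-1}(\Quot) \cong \lochom{i}$, which is part (1). For part (2) I pass to the graded strand in degree $q$: here $H_\mm^{d-1}(\kQsatm)_q = 0$ because $d-1 < d$, and $H_\mm^d(\kQsatm)_q = 0$ because $q \notin -\intt\sat{Q}$, so the connecting map yields $H_\mm^{d-1}(\Quot)_q \cong \lochom{d}_q$ as $\kk$-vector spaces.

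For part (3) the point is that $\Quot$ contributes nothing in degree $q$ when $q \in -\intt\sat{Q}$, i.e. $H_\mm^{j}(\Quot)_q = 0$ for every $j$; granting this, the long exact sequence gives $\lochom{i}_q \cong H_\mm^i(\kQsatm)_q$ for all $i$, which by the computation above is $\kk$ for $i=d$ and $0$ otherwise. To prove the vanishing I would compute $H_\mm^{\bullet}(\Quot)$ from the Ishida complex $\mho_Q \otimes \Quot$ and inspect its strand in degree $q$. The summand indexed by a face $F$ is the homogeneous localization $\Quot_{(\pF)}$, whose support is the set of holes of $Q_F$ (by \ref{prop:norloccom} together with the localization identity for quotients recalled in \ref{sec:prelim}); its degree-$q$ component can be nonzero only if $q \in \cone{Q_F}$. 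The geometric heart of the argument is the facet description
\[ \cone{Q_F} = \set{x \in \RR^N \with \langle a_j, x \rangle \geq 0 \text{ for every facet normal } a_j \text{ vanishing on } F} \,, \]
that is, localizing at $F$ simply discards the facet inequalities which do not contain $F$. Since $q \in -\intt\sat{Q}$ means $\langle a_j, q \rangle < 0$ for every facet normal $a_j$, any proper face $F$ (which lies in at least one facet) satisfies $q \notin \cone{Q_F}$, while the unique top face $F = Q$ gives $Q_Q = \gp{Q}$, a group, which has no holes. Hence the whole strand in degree $q$ vanishes, and part (3) follows.

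I expect the main obstacle to be pinning down the local cohomology of the normalization precisely in the *graded, not-necessarily-positive setting: one must check that Hochster's *Cohen--Macaulayness and the Danilov--Stanley description of the top graded piece persist when $F_0 \neq \set{0}$, so that $\mm = \pp_{F_0}$ is merely the *maximal ideal, and that base change along the finite extension $\kQ \hookrightarrow \kQsat$ does not alter the local cohomology. By comparison, the facet-inequality description of $\cone{Q_F}$ and the reduction to \qq{all facet values are negative} are routine once set up carefully.
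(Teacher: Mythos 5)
Your proposal is correct, and its skeleton is the same as the paper's: the short exact sequence $0 \to \kQ \to \kQsatm \to \Quot \to 0$, its long exact sequence, and a prior computation of $H_\mm^\bullet(\kQsatm)$; parts (1) and (2) are handled exactly as in the paper. The differences lie in how the auxiliary facts are established. For the normalization, you invoke the independence theorem along the finite extension $\kQ \hookrightarrow \kQsat$ (which tacitly requires checking that $\mm\kQsat$ has the *maximal ideal of $\kQsat$ as its radical) together with the Danilov--Stanley duality for the top cohomology; the paper instead compares complexes directly: by \ref{prop:norloccom} the degree-$q$ strand of $\mho_{\sat{Q}}$ coincides with that of $\kQsatm \otimes \mho_Q$, so the support of $H_\mm^\bullet(\kQsatm)$ equals the support of the local cohomology of $\kQsat$ as a ring, and the vanishing of the top cohomology outside $-\intt\sat{Q}$ is read off from the last map $\delta_{d-1}$ of $\mho_{\sat{Q}}$ using normality. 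This degreewise comparison is precisely what makes the non-positive *graded setting --- the issue you rightly flag as the main obstacle --- painless, whereas your route would still need the splitting of $\sat{Q}$ into its unit group times a positive normal monoid to justify Hochster and Danilov--Stanley in this generality. For part (3) the paper is also more direct: if $q \in -\intt\sat{Q}$ then $q \notin Q_F$ for every proper face $F$, so the degree-$q$ strand of $\mho_Q$ itself is $0 \to \kk \to 0$ concentrated in cohomological degree $d$, and all three cases of (3) drop out at once; your argument via the vanishing of $H_\mm^\bullet(\Quot)_q$ and the long exact sequence is equivalent but more roundabout, and it needs the exact one-dimensionality (not merely the support) of $H_\mm^d(\kQsatm)_q$, which is the strongest form of the input you cite.
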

\begin{proof}
	First, we compute the local cohomology of $\kQsatm$. In this proof, we write $\mm = \mm_{\kQ}$ for the *maximal ideal of $\kQ$ and $\mm_{\kQsat}$ for the *maximal ideal of $\kQsat$.
	It is well-known (cf. \cite[Thm 6.3.4]{brunsherzog}) that %$\kQsat$ is Cohen-Macaulay (as a ring) and 
	\[ H^i_{\mm_{\kQsat}}(\kQsat)_q =
		\begin{cases}
			\kk &\text{ if } i=d \text{ and } q \in - \intt \sat{Q}\,, \\
			0 &\text{ otherwise.}
		\end{cases}
	\]
	Note that the extension of the *maximal ideal of $\kQ$ to $\kQsat$ is $\mm_{\kQsat}$-primary. Therefore, by the graded Independence Theorem \cite[Thm 14.1.7]{brodmannsharp2} it holds that
	\[H^i_{\mm_{\kQ}}(\kQsatm)_q \cong H^i_{\mm_{\kQ}\kQsat}(\kQsat)_q = H^i_{\mm_{\kQsat}}(\kQsat)_q\]
	as vector spaces for $i\in\NN$ and $q\in\gp{Q}$.
	Next, we consider the short exact sequence
	\[ 0 \rightarrow \kQ \rightarrow \kQsatm \rightarrow \Quot \rightarrow 0 \]
	The corresponding long exact sequence in cohomology gives immediately that $\lochom{i} \cong  H_{\mm}^{i-1}(\Quot)$ for $i < d$.
	
	For $i = d$, we make a case distinction.
	If $q \notin - \intt \sat{Q}$, then $H_{\mm}^{i}(\kQsatm)_q = 0$ by the discussion above.
	Hence one can read off from the long exact sequence that $\lochom{d}_q \cong H_{\mm}^{d-1}(\Quot)_q$, because the maps are homogeneous.
	On the other hand, if $q \in - \intt \sat{Q}$, then $q \notin Q_F$ for any face $F$ of $Q$.
	So the degree $q$ part of $\mho_Q$ is just $0 \rightarrow \kk \rightarrow 0$ with the $\kk$ in cohomological degree $d$.
\end{proof}
\begin{corollary}\label{prop:dep}
	If $Q$ is not normal, then $\sdep \kQ = \sdep \Quot + 1$.
\end{corollary}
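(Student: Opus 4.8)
The plan is to characterize both *depths as the smallest cohomological degree in which local cohomology survives, and then to transport vanishing from $\Quot$ to $\kQ$ through the isomorphisms of \ref{thm:lochomneu}. Recall that for a finitely generated graded module the *depth equals $\min\set{i \with H^i_\mm(-) \neq 0}$; I would apply this both to $\kQ$ and to $\Quot$, the latter being finitely generated because the normalization is module-finite. Since $Q$ is not normal we have $\QoQ \neq \emptyset$, hence $\Quot \neq 0$, so $s \defa \sdep \Quot$ is a well-defined nonnegative integer, determined by $H^i_\mm(\Quot) = 0$ for $i < s$ and $H^s_\mm(\Quot) \neq 0$.

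Before comparing degree by degree, I would record the bound $s \le d-1$. The key observation is that $\kQsatm$ and $\kQ$ have the same rank as $\kQ$-modules, the normalization being birational, so $\Quot$ is a torsion module and therefore $\sdim \Quot \le d-1$. (Equivalently, the face $Q$ itself, the only face of *dimension $d$, cannot be associated to the torsion module $\Quot$, so there is no $d$-*dimensional family of holes.) Consequently $s \le \sdim \Quot \le d-1$, and in particular $s+1 \le d$; this keeps the relevant indices inside the range where \ref{thm:lochomneu} is sharpest.

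Now I would compare the two modules in each degree. For $i \le s$ we have $i \le d-1 < d$, so part (1) of \ref{thm:lochomneu} gives $\lochom{i} \cong H^{i-1}_\mm(\Quot)$; as $i-1 < s$ the right-hand side vanishes, whence $\lochom{i} = 0$. For the index $s+1$ there are two cases. If $s+1 < d$, part (1) again applies and yields $\lochom{s+1} \cong H^s_\mm(\Quot) \neq 0$. If instead $s+1 = d$, that isomorphism is unavailable, but the top local cohomology of the $d$-dimensional domain $\kQ$ never vanishes: indeed part (3) of \ref{thm:lochomneu} gives $\lochom{d}_q = \kk$ for each lattice point $q \in -\intt \sat{Q}$, and such points exist. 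Either way $\lochom{s+1} \neq 0$ while $\lochom{i} = 0$ for all $i \le s$, so $\sdep \kQ = s+1 = \sdep \Quot + 1$.

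I expect the only genuine care to be needed at the boundary degree $i = d$, where part (1) of \ref{thm:lochomneu} does not apply. This is exactly the reason the torsion bound $s \le d-1$ is worth isolating: it confines the vanishing argument to the safe range $i < d$, and it reduces the nonvanishing at degree $s+1$ to either the isomorphism of part (1) or the automatic nonvanishing of the top local cohomology.
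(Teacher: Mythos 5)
Your proof is correct and follows the same route as the paper: both rest on Grothendieck's characterization of *depth as the least $i$ with $H^i_\mm \neq 0$ and then read the statement off from \ref{thm:lochomneu}. The paper simply declares the conclusion \qq{immediate} from that theorem, whereas you make explicit the only point requiring care, namely the boundary case $s+1=d$, handled via the torsion bound $\sdep \Quot \le d-1$ and the nonvanishing of $\lochom{d}$ from part (3); this is a faithful elaboration, not a different argument.
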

\begin{proof}
	Recall that the *depth of a *local ring $(R, \mm)$ equals the minimal degree $i$ such that $H_\mm^i(R) \neq 0$%
	\footnote{This is well-known for local rings (cf. \cite[Theorem 3.5.7]{brunsherzog}) and the *local case can be reduced to the local case by localizing at the *maximal ideal.}%
	.
	Hence the claim is immediate from \Cref{thm:lochomneu}.
\end{proof}

Our next goal is to show how the families of holes restrict the support of the local homology modules.
For this, we take a closer look at the Ishida complex.
In this we follow \cite[Section 12.2]{millersturm}.
Fix an element $q \in\gp{Q}$.
The part of $\mho_Q$ in degree $q$ is determined by the faces $F \subset Q$ such that $q \in Q_F$.
Therefore, we consider the set $\nab \defa \set{F \subseteq Q \with q \in Q_F, F \text{ a face}}$.
This set is clearly closed under going up in the face lattice of $Q$.
Now let $\Pc$ be a cross-section polytope of $\cone{Q}$ and let $\Pc^\vee$ be the polar polytope of $\Pc$.
Then the face lattice of $\Pc^\vee$ equals the order dual of the face lattice of $Q$ (i.e. the face lattice of $\Pc$ turned upside down).
Hence the images $\nabv$ of the faces in $\nab$ in the face lattice of $\Pc^\vee$ form a set that is closed under going down.
In other words, $\nabv$ is a polyhedral subcomplex of the boundary complex of $\Pc^\vee$.
Because $\nab$ corresponds to the part of $\mho_Q$ in degree $q$, we can reinterpret this part as an (augmented) polyhedral chain complex for $\nabv$, while reversing the cohomological degrees.
So the reduced homology of the polyhedral cell complex $\nabv$ gives us the local cohomology of $\kQ$ in degree $q$ (\cite[p. 258]{millersturm}):
\begin{equation}\label{eq:nabla}
\lochom{i}_q = \tilde{H}_{d-1-i}(\nabv, \kk)
\end{equation}
\begin{theorem}\label{thm:lochomholes} %[\Cref{cor:loccomneu} and \Cref{cor:unendlich}]
	Let $q \in \gp{Q}$ such that $q \notin - \intt \sat{Q}$.
	If $\lochom{i+1}_q \neq 0$ for some $i$,
	then $q$ is contained in a family of holes of *dimension at least $i$.
	On the other hand, every $i$-*dimensional family of holes contains an element $q \in \gp{Q}$
	such that $\lochom{i+1}_q \neq 0$.
\end{theorem}
\begin{proof}
	First, assume that $\lochom{i+1}_q \neq 0$ for some $i$.
	By \Cref{thm:lochomneu}, we have $\lochom{i+1}_q \cong H_{\mm}^{i}(\Quot)_q$.
	We consider the \Th{i} module in $\mho_Q \otimes \Quot$. It is
	\[ \bigoplus_{F \in \FF{i}} \kQ[Q_F] \otimes \Quot = \bigoplus_{F \in \FF{i}} \kmod{\sat{Q}_F} / \kQ[Q_F] \]
	If $\lochom{i+1}_q \neq 0$, then there is an element of degree $q$ in this module.
	Hence there is a face $F$ of *dimension $i$ such that $q \in \QoQF$.
	Now our description of the holes in the localization $Q_F$ (cf. \Cref{prop:locholesNeu}) implies that $q$ is contained in a family of holes $\gp{G} + p$ of $Q$ with $F \subset G$. In particular, $\sdim G \geq \sdim F = i$.

	Next, let $\gp{F} + p$ be an $i$-*dimensional family of holes.
	Let $q \defa p - m q'$ for $q' \in \intt F$ arbitrary and an $m \in \NN$. %For every facet $G \nsupseteq F$ it holds that $\sigma_G(q') > 0$. 
	If $m$ is large enough, then $\sigma_G(q) < 0$ for each facet $G$ not containing $F$.
	We claim that $\nab = \set{G \with G \supsetneq F}$.
	Thus $\nabv$ is the boundary complex of the face $\bar{F}$ corresponding to $F$ in the polar polytope $\Pc^\vee$.
	This is a sphere of dimension $\dim \bar{F} - 1 = \dim \Pc -1 - (\sdim F - 1) - 1 = d - 2 - i$.
	So by \eqref{eq:nabla} it follows
	\[
	\lochom{i+1}_q = \tilde{H}_{d-2-i}(S^{d-2-i}, \kk) = \kk \]
	To prove our claim, we first consider a face $G$ that does not contain $F$.
	For such a $G$ we can find a facet $G' \supset G$ that does not contain $F$.
	By our construction, $\sigma_{G'}(q)$ and hence $q \notin Q_{G'}$.
	Thus $q \notin Q_G$ and therefore $G \notin \nab$.
	Next, by our choice of $q$, it holds that $q \in \QoQF$.
	In particular $F \notin \nab$.
	Moreover, $q \in \sat{Q}_G$ for every $G \supset F$, because $\sat{Q}_G \supset \sat{Q}_F$.
	It remains to show that $q \in Q_G$ for every $G \supsetneq F$.
	So assume on the contrary that $q \in \sat{Q}_G \setminus Q_G$ for such a $G$.
	There exists an element $f \in \intt F$ such that $q + f \in \sat{Q}_G \setminus Q_G \cap \sat{Q}$.
	But this implies $G + q + f \subset \QoQ$, which contradicts our choice $q \in \gp{F} + p$, by \Cref{thm:ZerlegungNeu}.
\end{proof}

We give an example to demonstrate the geometric meaning of the results in this section.
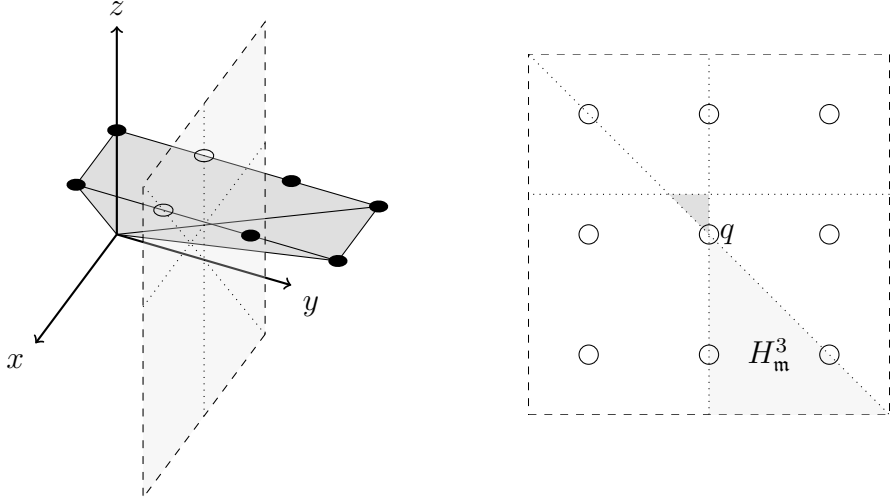
\begin{figure}[t]
	\centering
	\tdplotsetmaincoords{60}{115}
	\begin{tikzpicture}[scale=1.6]
	
	\begin{scope}[tdplot_main_coords, xscale=.8]
	\newcommand{\fac}{1}
	\coordinate (a) at ($(0,0,0)!\fac!(0,0,1)$);
	\coordinate (b) at ($(0,0,0)!\fac!(1,0,1)$);
	\coordinate (c) at ($(0,0,0)!\fac!(1,3,1)$);
	\coordinate (d) at ($(0,0,0)!\fac!(0,3,1)$);
	
	\fill[gray!50, opacity=0.5] (0,0,0) -- (a) -- (d) -- cycle;
	\fill[gray!50, opacity=0.5] (0,0,0) -- (a) -- (b) -- cycle;
	\fill[gray!50, opacity=0.5] (0,0,0) -- (c) -- (d) -- cycle;
	
	\draw (a) --(b) -- (c) -- (d) -- cycle;
	
	\foreach \x in {0,1}
	\foreach \y in {0,3}
	\draw (0,0,0) -- ($(0,0,0)!\fac!(\x,\y,1)$);
	
	\draw[thick,->] (0,0,0) -- (2,0,0) node[anchor=north east]{$x$};
	\draw[thick,->] (0,0,0) -- (0,2,0) node[anchor=north west]{$y$};
	\draw[thick,->] (0,0,0) -- (0,0,2) node[anchor=south]{$z$};
	
	\draw[dashed, fill=gray!20, fill opacity=0.3] (-1.5,1,-1.5) -- (1.5,1,-1.5) -- 
	(1.5,1,1.5) -- (-1.5, 1,1.5) -- cycle;
	\draw[dotted] (-1.5,1,-1.5) -- (1.5,1,1.5)
	(0,1,-1.5) -- (0, 1,1.5)
	(-1.5,1,0.3333) -- (1.5,1,0.3333);
	
	\foreach \x in {0,1}
	\foreach \y in {0,2,3}
	\fill (\x, \y, 1) circle (0.1);
	
	\foreach \x in {0,1}
	\draw (\x, 1, 1) circle (0.1);
	
	\end{scope}
	\begin{scope}[xshift=140, scale=1, xscale=-1]
	\clip (-1.5, -1.5) rectangle (1.5, 1.5);
	\draw[dashed] (-1.5, -1.5) rectangle (1.5, 1.5);
	
	\fill[gray!30,opacity=0.2] (0, -2) -- (0,0) -- (-2, -2) -- cycle;
	\fill[gray!60,opacity=0.4] (0, 0) -- (0, 0.3333) -- (0.3333, 0.3333) -- cycle;
	
	\foreach \x in {-1, ..., 4}
	\foreach \y in {-2, ..., 3}
	\draw (\x,\y) circle (0.08); % 0.1
	
	\draw[dotted]
	(0, -2.5) -- (0, 3.5)
	(-2.5, -2.5) -- (4.5, 4.5)
	(-1.5, 0.3333) -- (4.5, 0.3333);
	
	\path (-0.5,-1) node {$H^3_{\mathfrak{m}}$}
	(0,0) node[anchor=west] {$q$};
	\end{scope}
	\end{tikzpicture}
	\caption{The example of Trung and Hoa}
	\label{fig:trunghoa}
\end{figure}

\begin{example}\label{ex:lochom}
	Consider the affine monoid $Q \subset \ZZ^3$ generated by $(0,0,1)$, $(1,0,1)$, $(0,2,1)$, $(1,2,1)$, $(0,3,1)$ and $(1,3,1)$.
	It is shown in the left part of \Cref{fig:trunghoa}.
	This example is taken from \cite{trunghoa}. 
	The holes of $Q$ form a \qq{wall} parallel to the $xz$-plane.
	Hence, nontrivial local cohomology of $\kQ$ can only appear in the degrees of this wall.
	The right part of \Cref{fig:trunghoa} shows this wall and the intersections with the facet defining hyperplanes.
	In each region, $\nab[.]$ and thus $\lochom{i}$ is constant.
	In the shaded unbounded region pointing downwards, we have $\lochom{3} \neq 0$ by \Cref{thm:lochomholes}.
	All other unbounded regions do not support local cohomology because $\lochom{i}$ is Artininan. 
	So the only part of the local cohomology that is not classified so far is the lattice point $q$ in the small shaded triangle.
	In fact, one may compute directly that $\dim_\kk \lochom{2}_q = 1$.
\end{example}

\section{Applications}\label{sec:application}

\subsection{Special configurations of holes} %\label{sec:special}
In this section, we show that various ring-theoretical properties of $\kQ$ correspond to special configurations of the holes in $Q$.
For positive $Q$, the next proposition appeared as Corollary 5.3 in \cite{schenzel1}.
\begin{proposition}\label{prop:H1}
	Let $Q$ be an affine monoid with $\sdim Q \geq 2$.
	Then $\lochom{1}_q \neq 0$ if and only if $q$ is contained in a zero-*dimensional family of holes. In this case, $\lochom{1}_q = \kk$ and $\lochom{i}_q = 0$ for $i \neq 1$.
\end{proposition}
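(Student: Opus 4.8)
The plan is to reduce everything to degree-zero local cohomology of the quotient module and then read off both the support and the exact dimensions from results already in hand. Since $\sdim Q = d \geq 2$ we have $1 < d$, so part (1) of \ref{thm:lochomneu} applies with $i = 1$ and yields a $\gp{Q}$-graded isomorphism $\lochom{1} \cong H^0_\mm(\Quot)$. Consequently $\lochom{1}_q \neq 0$ holds exactly when $q$ lies in the support of $H^0_\mm(\Quot)$, and the entire proposition becomes a statement about this support.

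The next step is to identify $\supp H^0_\mm(\Quot)$ with the union of the $0$-*dimensional families of holes. Recall that $\mm = \pp_{F_0}$. A monomial $\xx^m$ with $m \in \QoQ$ lies in $H^0_\mm(\Quot) = \bigcup_n (0 :_{\Quot} \mm^n)$ precisely when a power of $\mm$ annihilates it, i.e.\ when $\pp_{F_0}$ is a minimal prime over $\Ann \xx^m$ (using that $\pp_{F_0}$ is the *maximal ideal). By \ref{lemma:translationel} this means $F_0$ is a maximal face with $F_0 + m \subseteq \QoQ$, that is, $\gp{F_0} + m$ is a family of holes; since $\sdim F_0 = 0$, it is a $0$-*dimensional one. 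As every graded component of $\Quot$ is at most one-dimensional over $\kk$, the support of $H^0_\mm(\Quot)$ is exactly the union of these $0$-*dimensional families. This proves the claimed equivalence, and because $H^0_\mm(\Quot)_q \subseteq \Quot_q$ forces $\dim_\kk \lochom{1}_q \leq 1$, it also gives $\lochom{1}_q = \kk$ whenever this space is nonzero.

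For the vanishing $\lochom{i}_q = 0$ when $i \neq 1$, I would invoke \ref{prop:nonvan} with the family $\gp{F_0} + p$ containing $q$, taking $F = F_0$ and $j = 0$. The crucial point is that the hypothesis of that proposition, namely that $q$ lie beyond every facet $G$ not containing $F$, is vacuous here: $F_0$ is contained in every face of $Q$, hence in every facet. Thus \ref{prop:nonvan} applies to every $q$ in the family without restriction and delivers $\lochom{i}_q = \kk$ for $i = j+1 = 1$ and $\lochom{i}_q = 0$ otherwise, completing the argument.

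I do not expect a serious obstacle: the two genuine inputs are the translation of $\mm$-torsion into a minimal-prime condition via \ref{lemma:translationel} and the observation that \ref{prop:nonvan} has an empty hypothesis for the minimal face. The only step requiring care is verifying $1 < d$, so that the clean isomorphism of \ref{thm:lochomneu}(1) is available; this is exactly where the assumption $\sdim Q \geq 2$ enters, since for $d = 1$ the boundary case $i = d$ would interfere with the reduction.
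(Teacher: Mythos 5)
Your proposal is correct, and its first half coincides with the paper's proof: both invoke \ref{thm:lochomneu} (using $\sdim Q \geq 2$ so that $i=1<d$) to get the graded isomorphism $\lochom{1} \cong H_{\mm}^{0}(\Quot)$, and both identify $\supp H_{\mm}^{0}(\Quot)$ with the union of the zero-*dimensional families of holes — you re-derive this from \ref{lemma:translationel} and the fact that $\mm$ is the *maximal ideal, which is exactly the computation carried out inside the paper's proof of \ref{lem:H0} and cited there; the bound $\dim_\kk \lochom{1}_q \leq 1$ via $H_{\mm}^{0}(\Quot) \subseteq \Quot$ is also identical.

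Where you genuinely diverge is the vanishing of $\lochom{i}_q$ for $i \neq 1$. The paper argues by contradiction with \ref{cor:loccomneu}: if $\lochom{i}_q \neq 0$ for some $i>1$, then $q$ lies in a family of holes $\gp{G}+p$ of positive *dimension, and since $F_0 \subseteq G$ the zero-*dimensional family containing $q$ would be contained in $\gp{G}+p$, contradicting irredundancy of the decomposition in \ref{thm:decompneu}. You instead apply \ref{prop:nonvan} with $F=F_0$, $j=0$, observing that its ``beyond every facet not containing $F$'' hypothesis is vacuous because $F_0$ lies in every facet; this is a legitimate reading of that proposition (its proof goes through: $\nab$ becomes the set of all faces strictly containing $F_0$, so $\nabv$ is the full boundary sphere of $\Pc^\vee$). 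Your route is slightly more economical in that a single application of \ref{prop:nonvan} delivers both $\lochom{1}_q = \kk$ and the vanishing for $i\neq 1$ at once, and it sidesteps the hypothesis $q \notin -\intt\sat{Q}$ of \ref{cor:loccomneu}, which the paper uses without comment (it does hold, since a zero-*dimensional family equals $F_0 + p \subseteq \QoQ \subseteq \sat{Q}$ and $\sat{Q} \cap (-\intt\sat{Q}) = \emptyset$ for $\sdim Q \geq 1$). The paper's route, by contrast, is softer: it needs only the qualitative support statement of \ref{cor:loccomneu} plus uniqueness of the decomposition, and no computation of the cell complex $\nabv$.
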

\begin{proof}
	By \Cref{thm:lochomneu}, we have $\lochom{1} \cong H_{\mm}^{0}(\Quot)$.
	It follows from the proof of \Cref{thm:ZerlegungNeu} that the support of $H_{\mm}^{0}(\Quot)$ is the union of the zero-*dimensional families of holes.
	
	For the second claim, note that $H_{\mm}^{0}(\Quot)$ is a submodule of $\Quot$ and $\dim_\kk (\Quot)_q \leq 1$ for all $q \in \gp{Q}$.
	Moreover, if $\lochom{i}_q \neq 0$ for some $i > 1$, then by \Cref{thm:lochomholes} $q$ has to be contained in a family of holes $\gp{G} + p$ of *dimension $i-1 > 0$.
	But then the zero-*dimensional family of holes containing $q$ is also contained in $\gp{G} + p$ (because the minimal face $F_0$ is contained in $G$), contradiction the irredundancy statement of \ref{thm:ZerlegungNeu}.
\end{proof}
%-----------------------------------------------------------------------------------------------------------------------

\begin{theorem}\label{thm:compos}
	Let $Q$ be an affine monoid of *dimension $d$. The following holds: 
	\begin{itemize}
		\item If $d \geq 2$, then $\sdep \kQ = 1$       if and only if there is a $0$-*dimensional family of holes.
		\item $Q$ is locally normal                     if and only if there is no family of holes of positive *dimension.
		\item $\kQ$ satisfies Serre's condition $(R_1)$ if and only if there is no family of holes of *dimension $d-1$.
		\item $\kQ$ satisfies Serre's condition $(S_2)$ if and only if every family of holes has *dimension $d-1$.
	\end{itemize}
\end{theorem}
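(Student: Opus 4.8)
The unifying observation is the dictionary established in the previous section: families of holes of *dimension $j$ are in bijection with associated faces $F$ of $Q$ with $\sdim F = j$ (\ref{prop:ass}), and these in turn correspond to the associated primes $\pF$ of $\Quot$ (\ref{thm:decompneu}); since $\Quot$ is $\gp{Q}$-graded, \emph{every} associated prime of $\Quot$ arises in this way. I will also use that $\kQ$ is a domain of positive *dimension, so $\lochom{0}=0$, together with the fact (recalled in the proof of \ref{prop:dep}) that $\sdep\kQ$ equals the least $i$ with $\lochom{i}\neq 0$. The first item is then immediate: since $d\geq 2$ we have $\lochom{0}=0$, so $\sdep\kQ=1$ if and only if $\lochom{1}\neq 0$, and by \ref{prop:H1} this happens exactly when $Q$ has a $0$-*dimensional family of holes.

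For local normality I would combine \ref{prop:locholes} with the reduction to faces of *dimension $1$. If there is no positive-*dimensional family of holes, then no associated face has *dimension $\geq 1$, hence no associated face contains a face $F$ of *dimension $1$, so every such $Q_F$ is normal by \ref{prop:locholes}; conversely a positive-*dimensional associated face $G$ contains some *dimension-$1$ face $F$, and then $G\supseteq F$ forces $Q_F$ to be non-normal. For $(R_1)$ I would invoke Serre's criterion for regularity in codimension one: since $\kQsat$ is the normalization of the domain $\kQ$, the ring satisfies $(R_1)$ if and only if $\kQ_\pp=(\kQsat)_\pp$ for every height-one prime $\pp$, i.e. if and only if the torsion module $\Quot$ has no associated prime of height one. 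As $\kQ/\pF\cong\kQ[F]$ has *dimension $\sdim F$, the prime $\pF$ has height one precisely when $F$ is a facet, i.e. when the corresponding family of holes has *dimension $d-1$.

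The condition $(S_2)$ is the delicate one. The plan is to reduce to the monomial primes: by the graded behaviour of Serre's conditions (cf. \cite{gotowata}), $\kQ$ satisfies $(S_2)$ if and only if $\kQ_{\pF}$ does for every face $F$. Writing $\kQ_{\pF}$ as the localization of the *local ring $\kQ[Q_F]=\kQ_{(\pF)}$ at its *maximal ideal, one has $\dim\kQ_{\pF}=\sdim Q_F=d-\sdim F$ and $\dep\kQ_{\pF}=\sdep\kQ[Q_F]$. For $\sdim F\geq d-1$ the inequality $\dep\geq\min(2,\dim)$ holds automatically (a positive-dimensional domain has depth at least one, and for $F=Q$ the localization is the fraction field), while for $\sdim F\leq d-2$ it reads $\sdep\kQ[Q_F]\geq 2$. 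By the first item applied to $Q_F$, this fails exactly when $Q_F$ has a $0$-*dimensional family of holes; and by \ref{prop:locholes} together with the count $\sdim_{Q_F} G=\sdim_Q G-\sdim_Q F$, a $0$-*dimensional family of holes of $Q_F$ corresponds to an associated face $G\supseteq F$ of $Q$ with $\sdim_Q G=\sdim_Q F$, i.e. with $G=F$. Thus $(S_2)$ fails if and only if some associated face has *dimension at most $d-2$, which is exactly the negation of the assertion that every family of holes has *dimension $d-1$.

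The main obstacle is this last item. One must carefully justify the reduction of $(S_2)$ to the monomial primes $\pF$ and the identifications $\dim\kQ_{\pF}=\sdim Q_F$ and $\dep\kQ_{\pF}=\sdep\kQ[Q_F]$, and then keep the *dimension bookkeeping under localization straight, so that a $0$-*dimensional family of holes of $Q_F$ is matched \emph{exactly} with the face $F$ being associated to $Q$. The remaining three items, by contrast, follow quickly once the dictionary between families of holes, associated faces, and associated primes of $\Quot$ is in place.
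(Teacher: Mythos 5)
Your proposal is correct and follows essentially the same route as the paper: the first item via \ref{prop:H1}, local normality via \ref{prop:locholes}, and $(S_2)$ by reducing to the graded localizations $\kQ[Q_F]$ and applying the depth-one criterion to each $Q_F$ — which is exactly the paper's argument, with the reduction to monomial primes and the *dimension bookkeeping under localization spelled out more explicitly than in the paper itself. The only (minor) divergence is in $(R_1)$: you invoke Serre's criterion through the normalization, identifying failure of $(R_1)$ with a height-one associated prime of the torsion module $\Quot$, whereas the paper reduces to facets and uses that a $1$-*dimensional affine monoid is normal if and only if it is regular; both are short standard arguments landing on the same conclusion that no facet is associated.
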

\begin{proof}
	The statements about local normality and Serre's $(R_1)$ follow readily from our description of the families of holes as the associated primes of $\Quot$.
	%dep=1
	For proving the criterion for $\sdep \kQ = 1$, recall that if $\sdim Q \geq 2$, then $\sdep \kQ = 1$ if and only if $\lochom{1} \neq 0$. This is equivalent to the existence of a zero-*dimensional family of holes by \Cref{prop:H1}.

	For Serre's criterion $(S_2)$, note that for any face $F$ of $Q$, it holds that $\sdep \kQ[Q_F] = 1$ if and only if $\dep \QuotF = 0$ (cf. \Cref{prop:dep}) and this is in turn equivalent to $\pF$ being an associated prime of $\Quot$.
	On the other hand, it holds that $\sdim \kQ[Q_F] = 1$ if and only if $F$ is a facet of $Q$.
	Therefore, Serre's condition $(S_2)$ is satisfied if and only if all associated primes of $\Quot$ are of dimension $d-1$.
\end{proof}

\begin{remark}
It is well-known that the monoid algebra $\kQ$ satisfies Serre's condition $(S_2)$ if and only if
\begin{equation}\label{eq:S2sch}
Q = \bigcap_{F \text{ facet of } Q} Q_F \,,
\end{equation}
cf. \cite{ishida,schenzel1}.
To see that this is equivalent to our condition, consider following the chain of inclusions:
\begin{equation}
Q \subseteq \bigcap_{F \in \FF{1}} Q_F \subseteq \bigcap_{F \in \FF{2}} Q_F \subseteq
\dotsc \subseteq \bigcap_{F \in \FF{d-1}} Q_F \subseteq \sat{Q} \,.
\end{equation}
It is not difficult to prove that the $i$th inclusion is proper if and only if there is an $F$ of $\sdim F = i$ that corresponds to a family of holes, i.e. $\pF$ is an associated prime of $\Quot$. Hence \eqref{eq:S2sch} holds if and only if all families of holes have *dimension $d-1$.

Moreover, this chain of inclusions gives rise to a similar chain on the algebra $\kQ$ and also on the quotients modulo $\kQ$.
This yields a filtration of $\Quot$ that turns out to be the dimension filtration, cf. \cite{schenzel2}.
It follows that if the $\kQ$-module $\Quot$ is sequentially Cohen-Macaulay, then *depth of $\Quot$ equals the smallest non-zero component in the filtration.
In view of \Cref{prop:dep}, this means that the *depth of $\kQ$ is one more than the smallest *dimension of a family of holes.
\end{remark}

The first part of the \Cref{thm:compos} can be generalized to an upper bound on the *depth:
\begin{theorem} \label{thm:main1}
	If $Q$ has an $i$-*dimensional family of holes, then the *depth of $\kQ$ is at most $i+1$.
\end{theorem}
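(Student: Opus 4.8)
The plan is to deduce the bound directly from the characterization of *depth via the vanishing of local cohomology, combined with the nonvanishing result already in hand from \ref{cor:unendlich}. The starting point is the fact recalled in the proof of \ref{prop:dep}: by Grothendieck's theorem applied to the *local ring $(\kQ, \mm)$, the *depth of $\kQ$ equals the least cohomological degree $j$ in which $\lochom{j}$ is nonzero.

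With this in place, the one thing I need to do is exhibit a nonzero local cohomology module in degree $i+1$. So I would start from an $i$-*dimensional family of holes $\gp{F} + p$ and apply \ref{cor:unendlich}, which produces an element $q \in \gp{Q}$ with $\lochom{i+1}_q \cong \kk$; in particular $\lochom{i+1} \neq 0$. Putting the two facts together immediately yields $\sdep \kQ = \min\set{j \with \lochom{j} \neq 0} \leq i+1$, which is the assertion.

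I do not expect a genuine obstacle here, since the real content has already been established upstream. The weight of the argument rests on \ref{cor:unendlich}, which itself relies on the explicit computation of the complex $\nab[.]$ and the identification in \ref{prop:nonvan} of the relevant strand of the Ishida complex with the reduced homology of a sphere via \eqref{eq:nabla}. The only points I would make explicit in writing this up are that the nonvanishing half of \ref{cor:unendlich} suffices (the infiniteness statement is not needed), and that it is precisely Grothendieck's vanishing theorem that converts a single nonzero graded piece in degree $i+1$ into the inequality $\sdep \kQ \leq i+1$.
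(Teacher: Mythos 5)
Your proof is correct and is essentially the paper's own argument: the paper likewise derives $\lochom{i+1} \neq 0$ from \ref{cor:unendlich} and concludes $\sdep \kQ \leq i+1$ via the Grothendieck characterization of *depth as the least degree of nonvanishing local cohomology (the paper also sketches a second, alternative route through \ref{prop:dep} and associated primes, but that is not needed). One small naming point: what you invoke is Grothendieck's non-vanishing/depth-sensitivity theorem, not the vanishing theorem, but the content of your argument is exactly right.
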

\begin{proof}
	If $Q$ has an $i$-*dimensional family of holes, then $\lochom{i+1} \neq 0$ by \Cref{thm:lochomholes}.
	Hence $\sdep \kQ \leq i+1$.
	Alternatively, by \Cref{prop:dep} we can consider the depth of $\Quot$.
	As the families of holes of $Q$ correspond to the associated primes of $\Quot$, the claim follows from the general fact that the *depth of a module is bounded above by the *dimensions of its associated primes
	%\footnote{Again, only the local version of this result seems to be in the literature, cf. \cite[Prop 1.2.13]{brunsherzog}. However, Goto and Watanabe proved in\cite{gotowata} that the *depth and *dimension are preserved under localization. So the *local case follows from the local one by localization.}
	.
\end{proof}

\subsection{Seminormal affine monoids}
In this subsection, we apply our results to seminormal affine monoids.
This way we reprove and extend some results of \cite{brunsgubel}.
Recall that an affine monoid $Q$ is called \emph{seminormal} if $2q, 3q \in Q$ implies $q \in Q$ for $q \in \gp{Q}$.
Equivalently, for every $q \in \QoQ$, the set $\set{m \in \NN \with m q \in Q}$ is contained in a proper subgroup of $\ZZ$.
First, we give a geometric characterization of seminormality that is similar in spirit to the characterizations given in \cite[p. 66f]{brunsgubel}.
\begin{proposition}\label{prop:sngeo}
	Let $Q$ be an affine monoid. $Q$ is seminormal if and only if for every family of holes $\gp{F} + q$ it holds that $q \in \QQ F$.
\end{proposition}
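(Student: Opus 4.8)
The plan is to prove both implications by connecting the seminormality condition to the geometry of the families of holes via the description in \ref{thm:decompneu}, specifically the characterization that a family of holes $\gp{F}+q$ arises precisely when $F$ is a maximal face with $F+q \subseteq \QoQ$. The key observation is that seminormality is, by the equivalent formulation given just before the statement, a condition on the subgroup $\set{m \in \ZZ \with mq' \in Q}$ for holes $q'$, and I expect this to translate directly into a statement about whether the translation vector $q$ of a family of holes lies in $\QQ F$.

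For the forward direction, assume $Q$ is seminormal and let $\gp{F}+q$ be a family of holes, so that $F+q \subseteq \QoQ$ and $F$ is maximal with this property. I would argue by contradiction: suppose $q \notin \QQ F$. Using \ref{lemma:abschl}, I can pass to $\sat{F}+q \subseteq \QoQ$, so every element of $\sat{F}+q$ is a hole. The idea is to produce a hole $q'$ in this family for which some nontrivial multiple structure contradicts seminormality — concretely, one wants to find $q' \in \sat{F}+q$ with $q' \in \QoQ$ but such that the set $\set{m \with mq' \in Q}$ is not contained in a proper subgroup of $\ZZ$, i.e. generates all of $\ZZ$. Here I would exploit that $q \in \sat{Q}$ (from \ref{thm:decompneu}, $q_i \in \sat{Q}$), so a multiple of each hole lands in $\sat{Q}$, and the failure $q \notin \QQ F$ should force the relevant multiples to hit $Q$ in a way incompatible with seminormality.

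For the converse, assume that every family of holes $\gp{F}+q$ satisfies $q \in \QQ F$, and let $p \in \gp{Q}$ with $2p, 3p \in Q$; I must show $p \in Q$. If $p \notin Q$ then $p \in \sat{Q}$ (since multiples of $p$ lie in $Q \subseteq \sat{Q}$), so $p \in \QoQ$ and hence $p$ lies in some family of holes $\gp{F}+q$ with $q \in \QQ F$. The plan is to show that $q \in \QQ F$ together with $p \in \gp{F}+q$ forces $p \in \QQ F$, and then to use that $2p$ and $3p$ both lie in $Q$ to locate $p$ inside the face $F$ itself (or to derive that the multiples $2p, 3p$ cannot both avoid generating $\ZZ$-worth of multiples), contradicting that $p$ is a genuine hole. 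The arithmetic here is that $\gcd(2,3)=1$, so $2p,3p \in Q$ gives $mp \in Q$ for all $m \geq 2$, and I would combine this with $p \in \QQ F$ to show $p$ is actually in $Q$.

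The main obstacle I anticipate is the forward direction, specifically engineering the precise hole $q'$ whose multiples violate the seminormality criterion when $q \notin \QQ F$. The subtlety is that seminormality forbids the multiplier set from generating $\ZZ$, so I need to translate $q$ within its family (using the freedom to choose representatives noted after the definition of $\paare$, and the fact that the family is closed under adding elements of $\sat{F}$ by \ref{lemma:abschl}) to arrange a representative $q'$ for which both a multiple and the element-plus-one-step structure land appropriately; getting the index arithmetic in $\gp{F}$ versus $\QQ F$ exactly right, so that $q \notin \QQ F$ yields two coprime multiples in $Q$, is where the care is needed.
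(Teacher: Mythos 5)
Your second implication (the geometric condition implies seminormality) is essentially the paper's own argument and works: if $2p,3p\in Q$ but $p\notin Q$, then $p$ lies in some family $\gp{F}+q$ and may itself be taken as its representative, so $p\in \QQ F$; then $2p\in Q\cap \QQ F=F$, hence $3p=p+2p\in F+p\subseteq \QoQ$, contradicting $3p\in Q$. Up to phrasing, this is exactly the paper's first paragraph.

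The genuine gap is in your first implication (seminormal $\Rightarrow$ $q\in\QQ F$), and you flag it yourself: you never say how ``$q\notin\QQ F$'' is to be converted into a hole with two coprime multiples in $Q$. This conversion is not routine, because the data you propose to exploit --- that some multiple of each hole lands in $Q$ --- is perfectly compatible with seminormality: the seminormal monoid generated by $(2,0),(0,1),(1,1)$ has holes $(2k+1,0)$ whose multiples in $Q$ are exactly the even ones. To violate seminormality you must produce a hole \emph{all} of whose sufficiently large multiples lie in $Q$, and this needs two ingredients absent from your sketch. First, the maximality of $F$ in \ref{thm:decompneu}: for $f\in\intt F$ the smallest face $G$ of $Q$ with $q+f\in\intt\cone{G}$ satisfies $G\supsetneq F$ precisely because $q\notin\QQ F$, and maximality of $F$ then yields $g\in G$ with $q+g\in Q$; this, not translation inside $\sat{F}+q$, is what forces an element of the family into the lattice $\gp{G}$ of the larger face rather than merely into $\gp{Q}$. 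Second, one needs the face-wise characterization of seminormality, \cite[Proposition 2.40]{brunsgubel} (equivalently, the conductor argument showing that an element of $\gp{G}\cap\intt\cone{G}$ has all large multiples in $G\subseteq Q$, after which the largest multiple not in $Q$ gives an $x$ with $2x,3x\in Q$, $x\notin Q$). The paper's proof of this direction is exactly these two steps: it constructs $p\in\gp{F}+q$ with $p\in\intt\sat{G}$, $p\notin G$, for a face $G\supsetneq F$, and then cites \cite[Proposition 2.40]{brunsgubel}. Without these ideas, your ``index arithmetic in $\gp{F}$ versus $\QQ F$'' has no mechanism for producing the coprime multiples, so this half of the proposition remains unproved.
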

\noindent Here $\QQ F$ denotes the $\QQ$-subspace of $\QQ Q$ generated by $F$.
\begin{proof}
	First, assume that the condition in the statement is satisfied.
	Consider a family of holes $\gp{F} + q$.
	Since $q \in \QoQ$, there exists an $m \in \NN$ such that $m q \in Q$.
	By our assumption, it holds that $m q \in F$ and therefore $j m q + q \in \gp{F} + q \cap \sat{Q} \subset \QoQ$ for every $j \in \NN$.
	It follows that either $2q \notin Q$ or $3q \notin Q$.
	Thus, $Q$ is seminormal.
	
	On the other hand, assume there is a family of holes $\gp{F} + q$ such that $q \not \in \QQ F$. Then there exists an element $p \in \gp{F} + q$ such that $p \in \intt \sat{G}$ and $p \notin G$ for some face $G \supset F$. Thus $Q$ is not seminormal by \cite[Proposition 2.40]{brunsgubel}.
\end{proof}
Note the preceding proposition immediately implies that seminormality is preserved under localization.
We obtain a short proof of Corollary 5.4 of \cite{blrSeminorm}:
\begin{corollary}[\cite{blrSeminorm}] %\label{cor:snposh1}
	Let $Q$ be a seminormal positive affine monoid of dimension at least $2$. Then $\dep \kQ \geq 2$.
\end{corollary}
\begin{proof}
	If $Q$ is positive, then the minimal face $F_0$ contains only the origin $0 \in \gp{Q}$.
	By \Cref{prop:sngeo} every $0$-dimensional family of holes would be contained in $\QQ F_0 = \set{0} \subset Q$, 
	so there is no $0$-dimensional family of holes.
	Hence the claim follows from \Cref{thm:compos}.
\end{proof}
This result is not valid if one omits the requirement that $Q$ is positive.
\begin{example}\label{ex:seminonpos}
Consider $Q \subset \ZZ^3$ defined by
\[ Q = \set{(x,y,z) \in \ZZ^3 \with x,y \geq 0, z \text{ even or }x>0 \text{ or } y > 0} \]
This monoid is seminormal, has *dimension $2$ and has a $0$-*dimensional family of holes, namely the odd points on the $z$-axis. So it has $\sdep \kQ = 1$ by \Cref{thm:compos}.
\end{example}

Next we give a preliminary characterization of seminormality. Geometrically, we show that the graded components of the local cohomology of a seminormal affine monoid are, in a certain sense, constant on rays from the origin.
\begin{lemma}\label{lemma:ray}
	An affine monoid $Q$ is seminormal if and only if it satisfies the following condition:
	For every $q \in \gp{Q}$ there exists a positive $m \in \NN$ such that for every $j \in \NN$ and every $i \in \NN$ it holds that $\lochom{i}_q \cong \lochom{i}_{(1+m j) q}$ (as $\kk$-vector space).
\end{lemma}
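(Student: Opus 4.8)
The plan is to characterize the condition in \ref{lemma:ray} through the formula \eqref{eq:nabla}, which identifies $\lochom{i}_q$ with reduced homology of the polyhedral complex $\nabv$. The crucial observation is that $\nab$, and hence $\nabv$, depends on $q$ only through the signs of the supporting linear forms: for a face $F$ we have $q\in Q_F$ precisely when $q$ can be written as a difference of an element of $Q$ and an element of $F$, a condition governed by the values $\sigma_G(q)$ on facets $G$. Thus I would first establish that $\nabv$ depends only on the ray through $q$ in a suitable sense, which is exactly what makes the local cohomology \qq{constant on rays} for seminormal monoids.

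For the forward direction, assume $Q$ is seminormal. Given $q\in\gp{Q}$, I would choose $m$ so that $m$ clears denominators in the sense that $\sigma_G(q)$ and $m\,\sigma_G(q)$ have the same sign pattern for all facets $G$, and more importantly so that $(1+mj)q$ lies in $Q_F$ if and only if $q$ does. The seminormality hypothesis enters via \ref{prop:sngeo}: whenever $q$ lies in a family of holes $\gp{F}+p$, we have $p\in\QQ F$, so the obstruction to $q$ lying in $Q_G$ (as opposed to $\sat{Q}_G$) for faces $G\supseteq F$ is controlled along the ray. Concretely, I would argue that $q\in\sat{Q}_G\setminus Q_G$ forces the same for $(1+mj)q$ once $m$ is chosen appropriately, using that being a genuine hole is a condition on the residue class modulo $\gp{G}$ together with the lineality structure. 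This yields $\nab[(1+mj)q]=\nab$, whence $\nabv[(1+mj)q]=\nabv$ and the isomorphism $\lochom{i}_q\cong\lochom{i}_{(1+mj)q}$ follows from \eqref{eq:nabla}.

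For the converse, I would argue contrapositively: if $Q$ is not seminormal, then by \ref{prop:sngeo} there is a family of holes $\gp{F}+q$ with $q\notin\QQ F$. I expect to exploit this to produce an element (for instance one of the \qq{interior} witnesses $p\in\intt\sat{G}\setminus G$ from the proof of \ref{prop:sngeo}, obtained via \cite[Proposition 2.40]{brunsgubel}) whose local cohomology along its ray jumps: along the ray $(1+mj)q$ the membership in $Q_G$ versus $\sat{Q}_G$ changes with $j$ because $q\notin\QQ F$ means the points cross in and out of the genuine monoid $Q$ while staying in $\sat{Q}$. This change of $\nab$ along the ray produces distinct complexes $\nabv[(1+mj)q]$ for different $j$, and I would select a degree $i$ where the reduced homology actually differs, contradicting the assumed isomorphisms.

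The main obstacle I anticipate is the converse direction, specifically pinning down a single cohomological degree $i$ at which $\lochom{i}_q$ and $\lochom{i}_{(1+mj)q}$ genuinely differ rather than merely showing $\nabv$ changes; two distinct subcomplexes can still share the same reduced homology. To handle this I would look for the cleanest invariant: the presence or absence of a particular minimal non-face (as in the simplicial analysis of \ref{lem:simplnabla}), or more robustly use \ref{lemma:artin} in reverse, since that lemma already tells us that a strictly monotone sequence $\sigma_F(q_j)$ of degrees sharing a common nonzero $\lochom{i}$ is impossible. The interplay is that seminormality should force \emph{all} nonvanishing components along a ray to repeat periodically (compatible with \ref{lemma:artin} only because they recur rather than persist), whereas its failure breaks this periodicity in a detectable degree.
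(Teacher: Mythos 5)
Your skeleton is the same as the paper's: prove the forward direction by showing $\nab = \nab[(1+mj)q]$ for a suitable $m$ and invoking \eqref{eq:nabla}, and prove the converse by combining \ref{prop:sngeo} with \ref{lemma:artin}. However, at both cruxes the proposal stops short of an argument. In the forward direction everything hinges on how $m$ is chosen, and you never say how. The paper's mechanism is: the only nontrivial case is a face $F$ with $q \in \QoQF$ (the cases $q \in Q_F$ and $q \notin \sat{Q}_F$ are trivial, since $Q_F$ is closed under taking multiples and $\sat{Q}_F$ is saturated); by \ref{cor:localsn} the localization $Q_F$ is again seminormal, so the multiplier set $\set{m \in \NN \with mq \in Q_F}$ lies in a proper subgroup of $\ZZ$, and since only finitely many faces occur one takes $m$ in the intersection of these subgroups, which forces $1+mj$ to avoid all of them. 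Your substitute --- that being a hole of $Q_G$ is a condition on the residue class modulo $\gp{G}$ --- is true (because $\gp{G}$ consists of units of $Q_G$), but it does not by itself control the progression $(1+mj)q$, which moves transversally to $\gp{G}$; one still needs an arithmetic input, either the proper-subgroup property above, or divisibility of $m$ by the index $[\gp{Q} \cap \QQ F : \gp{F}]$ combined with the fact (from \ref{prop:sngeo}) that $q \in \QQ F$. As written, this is a real, though fixable, gap.

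The converse contains the more serious gap, and it is exactly the one you flagged without resolving. \ref{lemma:artin} yields a contradiction only if you already possess a degree $p$ with $\lochom{i}_p \neq 0$ for some $i$ and a facet $G$ with $\sigma_G(p) > 0$. From \ref{prop:sngeo} you get a family of holes $\gp{F}+q$ with $q \notin \QQ F$, hence a facet $G \supset F$ with $\sigma_G(q) > 0$, and $\sigma_G$ is constant on the family; but the needed nonvanishing class comes neither from the witnesses of \cite[Prop.~2.40]{brunsgubel} nor from comparing minimal non-faces of $\nabv$ --- it comes from \ref{prop:nonvan} (equivalently \ref{cor:unendlich}): choose $p \in \gp{F}+q$ lying beyond every facet not containing $F$, so that $\lochom{i}_p \cong \kk$ for $i = \sdim F + 1$. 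Then $p_j \defa (1+mj)p$ has strictly increasing $\sigma_G$-values and, by the assumed condition, constant nonzero cohomology in degree $i$, contradicting \ref{lemma:artin}. Note that this argument does not need the cohomology to \emph{differ} along the ray (your stated obstacle); it needs it to \emph{persist}, which is precisely what \ref{lemma:artin} forbids. Without \ref{prop:nonvan} your proposal never produces the ``detectable degree'' it requires.
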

\begin{proof}
	Assume that $Q$ is seminormal and fix an element $q \in \gp{Q}$.
	We will find an $m \in \NN$ such that $\nab = \nab[(1+m j) q]$ for every $j \in \NN$. This implies our claim by \eqref{eq:nabla}.
	First, note that $q \in Q_F$ implies $m q \in Q_F$ for every $m \in \NN$.
	Similarly, $q \notin \sat{Q}_F$ implies $m q \notin \sat{Q}_F$ for every $m \in \NN$.
	So it remains to show the following: There exists an $m \in \NN$, such that for every face $F$ with $q \in \QoQF$ and every $j \in \NN$ it holds that $(1 + j m) q \in \QoQF$.
	For this, consider a face $F$ of $Q$ such that $q \in \QoQF$.
	The localization $Q_F$ is seminormal and thus the set $\set{ m \in \NN \with m q \in Q_F}$ is contained in a proper subgroup of $\ZZ$.
	Since there are only finitely many such faces, we can choose an $m$ in the intersection of these subgroups (for example, the product of the generators).
	Then $1 + j m$ is not contained in any of these subgroups for every $j \in \NN$. Whence our claim follows.
	
	%converse
	For the converse, assume that $Q$ is not seminormal.
	Let $\gp{F} + q$ be a family of holes such that $q \notin \QQ F$.
	Then there exists a facet $G \supset F$ of $Q$ such that $\sigma_G(q) > 0$.
	By \Cref{thm:lochomholes}, we can find an $p \in \gp{F} + q$ such that $\lochom{i}_p \neq 0$ for $i = \sdim F + 1$.
	By assumption, there exists an $m\in\NN$ such that for all $j \in \NN$ it holds that $\lochom{i}_{(1+m j)p} \neq 0$.
	But this contradicts the fact that $\lochom{i}$ is Artinian, because $\sigma_G(p) > 0$ implies that $\lochom{i}_{(1+m j)p}$ is not contained in the submodule generated by $\lochom{i}_{(1+m j)p + p}$.
\end{proof}
\noindent With a little more work, one can show that the $m$ in the preceding lemma can be chosen independently of $q$.
We extend the characterization of seminormality given in Theorem 4.7 of \cite{blrSeminorm}.
\begin{theorem} %[Thm. 4.7, \cite{blrSeminorm}]
	\label{thm:charseminormal}
	Let $Q$ be an affine monoid. The following statements are equivalent:
	\begin{enumerate}
		\item $Q$ is seminormal.
		\item $\lochom{i}_q = 0$ for all $q \in \gp{Q}$ such that $q \notin - \sat{Q}$ and all $i$.
		\item $\lochom{i}_q = 0$ for all $q \in \gp{Q}$ such that $q \notin - \sat{Q}$ and all $i$ such that $Q$ has an $(i+1)$-*dimensional family of holes.
	\end{enumerate}
\end{theorem}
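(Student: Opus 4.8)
The plan is to prove the cyclic chain of implications $(1) \Rightarrow (2) \Rightarrow (3) \Rightarrow (1)$. The implication $(2) \Rightarrow (3)$ is immediate, since (3) only demands the vanishing of $\lochom{i}_q$ in a subset of the degrees $i$ already covered by (2). The two substantial steps are $(1) \Rightarrow (2)$, which converts seminormality into a global vanishing statement, and $(3) \Rightarrow (1)$, which I expect to be the main obstacle.

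For $(1) \Rightarrow (2)$, I would fix $q \in \gp{Q}$ with $q \notin - \sat{Q}$ and exploit the ray description of \ref{lemma:ray}. Since $q \notin -\sat{Q}$, the element $-q$ fails to lie in $\cone{Q}$, so there is a facet $F$ with $\sigma_F(q) > 0$. By \ref{lemma:ray} there is an $m \in \NN$ with $\lochom{i}_q \cong \lochom{i}_{(1+mj)q}$ for all $i,j$, and along the sequence $q_j \defa (1+mj)q$ the value $\sigma_F(q_j) = (1+mj)\sigma_F(q)$ is strictly increasing. Hence the hypotheses of \ref{lemma:artin} are met for each fixed $i$, forcing $\lochom{i}_{q_j} = 0$ for all $j$; in particular $\lochom{i}_q = 0$. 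As $i$ and $q$ were arbitrary, this yields (2).

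For $(3) \Rightarrow (1)$ I would argue by contraposition: assuming $Q$ is not seminormal, I produce a non-vanishing graded piece of local cohomology at a point outside $-\sat{Q}$ and in a cohomological degree that (3) claims to control. By \ref{prop:sngeo} there is a family of holes $\gp{F} + q$ with $q \notin \QQ F$; write $j \defa \sdim F$ and choose the representative $q$ inside $\sat{Q}$. The condition $q \notin \QQ F$ then guarantees a facet $G \supseteq F$ with $\sigma_G(q) > 0$, since the facets through $F$ intersect exactly in $\QQ F$ and $q \in \sat{Q}$ makes every supporting form nonnegative. Now \ref{prop:nonvan} applies: for a point $q'$ of $\gp{F} + q$ lying beyond every facet not containing $F$ one has $\lochom{j+1}_{q'} \cong \kk$. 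Because $q' - q \in \gp{F} \subseteq \ker \sigma_G$, we still have $\sigma_G(q') = \sigma_G(q) > 0$, so $q' \notin - \sat{Q}$. Thus a non-zero class sits at an admissible degree $q'$, and since $\gp{F}+q$ is itself a $j$-*dimensional family of holes, its cohomological degree $j+1$ is exactly one of those singled out in (3); hence (3) is violated.

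The delicate point, and the main obstacle, is the interplay of the two positivity requirements in the last step. The same construction applied to a \emph{seminormal-type} family (one with $q \in \QQ F$) instead places the non-vanishing class \emph{inside} $-\sat{Q}$: there every facet $G \supseteq F$ satisfies $\sigma_G(q) = 0$, while the facets not containing $F$ are driven negative, so all supporting forms are $\le 0$ and $q' \in -\sat{Q}$. This is precisely why such families are invisible to conditions (2) and (3), and it is what makes the characterization sharp. Verifying that a non-seminormal family always supplies a facet $G \supseteq F$ with $\sigma_G(q) > 0$, so that the class survives into the admissible range, is therefore the crux; it rests on the description of $\QQ F$ as the intersection of the supporting hyperplanes of the facets through $F$, together with the reasoning already used in the proof of \ref{lemma:ray}.
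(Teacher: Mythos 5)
Your proof is correct and follows essentially the same route as the paper: (1)$\Rightarrow$(2) via \ref{lemma:ray} combined with \ref{lemma:artin}, the trivial (2)$\Rightarrow$(3), and (3)$\Rightarrow$(1) by contraposition using \ref{prop:sngeo} to get a family of holes $\gp{F}+q$ with $q \notin \QQ F$, a facet $G \supseteq F$ with $\sigma_G(q)>0$, and \ref{prop:nonvan} (the paper invokes \ref{cor:unendlich}, which is the same computation) to place a nonvanishing cohomology class in a degree outside $-\sat{Q}$. Your explicit justification that such a facet $G$ exists --- using that the representative $q$ lies in $\sat{Q}$ and that $\QQ F$ is cut out by the supporting hyperplanes of the facets through $F$ --- is a point the paper leaves implicit, but the argument is the same.
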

\noindent Note that the third condition generalizes Theorem 4.9 in \cite{blrSeminorm}.
\begin{proof}
	1) $\Rightarrow$ 2) 
	Let $Q$ be seminormal and let $q \in \gp{Q}$. By \Cref{lemma:ray}, there exists a positive integer $m$ such that $\lochom{i}_q \cong \lochom{i}_{(1+m j) q}$ for every $i$ and every $j \in \NN$.
	If $q \notin - \sat{Q}$ then $\lochom{i}_q = 0$, because $\lochom{i}$ is Artinian.
	
	2) $\Rightarrow$ 3)
	This is obvious.
	
	3) $\Rightarrow$ 1)
	Assume that $Q$ is not seminormal. Then, by \Cref{prop:sngeo}, there is a family of holes $\gp{F} + q$ of $Q$ such that $q \notin \QQ F$.
	There exists a facet $G$ containing $F$ such that $\sigma_G(q) > 0$.
	By \Cref{thm:lochomholes}, there exists an element $p \in \gp{F} + q$ such that $\lochom{i}_p \neq 0$ where $i = \sdim F + 1$.
	The linear form $\sigma_G$ is constant on $\gp{F} + q$, so $\sigma_G(p) > 0$ and hence $p \notin - \sat{Q}$.
\end{proof}
\newcommand{\FQsat}{F_0^{\sat{Q}}}
Our next results extend Proposition 4.15 of \cite{blrSeminorm}.
In the non-positive case we may have nontrivial local cohomology in the degrees $\FQsat \defa \sat{Q} \cap (-\sat{Q}) = \QQ F_0 \cap \gp{Q}$,
as in \Cref{ex:seminonpos}.
Note that if $Q$ is positive, then $\FQsat = \set{0} \subset Q$, so there can be no local cohomology supported in $\FQsat$.
Moreover, if $\lochom{i}_q \neq 0$ for some $q \in \gp{Q}$ then $q \in - \sat{Q}$ by the preceding theorem. So in this case, $q \notin \FQsat$ if and only if $q \notin \sat{Q}$.
\begin{proposition}[Prop. 4.14, \cite{blrSeminorm}]\label{cor:sninfinite}
	Let $Q$ be a seminormal affine monoid.
	If $\lochom{i}_q \neq 0$ for some $q \in \gp{Q}$, $q \notin \FQsat$, then $\lochom{i}$ is not finitely generated.
\end{proposition}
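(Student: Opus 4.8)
The plan is to combine the ray--invariance of local cohomology for seminormal monoids (\ref{lemma:ray}) with a geometric escape argument showing that the nonvanishing degrees cannot be covered by finitely many translates of $Q$. First I would locate $q$ geometrically. Since $\lochom{i}_q \neq 0$, \ref{thm:charseminormal} forces $q \in -\sat{Q}$; together with the hypothesis $q \notin \FQsat = \sat{Q}\cap(-\sat{Q})$ this yields $q \notin \sat{Q}$. As $q \in \gp{Q}$ and $\sat{Q} = \gp{Q}\cap\cone{Q}$, we conclude $q \notin \cone{Q}$. Consequently $q$ violates one of the facet inequalities cutting out $\cone{Q}$ inside its linear span, so there is a facet $G$ of $Q$ with $\sigma_G(q) < 0$, where $\sigma_G$ is the (nonnegative on $\cone{Q}$) supporting linear form of $G$.

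Next I would use seminormality to produce infinitely many nonzero graded components along the ray through $q$. By \ref{lemma:ray} there is a positive $m \in \NN$ with $\lochom{i}_{(1+mj)q} \cong \lochom{i}_q \neq 0$ for every $j \in \NN$. Thus all the degrees $(1+mj)q$, for $j = 0,1,2,\dotsc$, lie in $\supp \lochom{i}$.

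Finally I would run the finite-generation contradiction. Suppose $\lochom{i}$ were finitely generated, say by homogeneous elements of degrees $g_1,\dotsc,g_s$. Then, as for any finitely generated $\gp{Q}$-graded $\kQ$-module, $\supp\lochom{i} \subseteq \bigcup_{k=1}^{s}(g_k + Q)$. Membership $(1+mj)q \in g_k + Q$ would force $(1+mj)q - g_k \in Q \subseteq \cone{Q}$, hence $\sigma_G\bigl((1+mj)q\bigr) \geq \sigma_G(g_k)$. But $\sigma_G\bigl((1+mj)q\bigr) = (1+mj)\,\sigma_G(q) \to -\infty$ because $\sigma_G(q) < 0$ and $m > 0$. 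So for each of the finitely many $k$ only finitely many $j$ can satisfy the inequality, and the union covers only finitely many $j$; this contradicts the fact that every $(1+mj)q$ lies in the support. Therefore $\lochom{i}$ is not finitely generated.

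The main obstacle is really the first step: extracting a facet $G$ with $\sigma_G(q) < 0$, since this is exactly where the two hypotheses interact. If $q$ lay in the lineality space $\QQ F_0$ (equivalently $q \in \FQsat$), then $\sigma_G(q) = 0$ for all facets and the ray would not escape any single translate of $\cone{Q}$, so the argument would collapse—this is why the assumption $q \notin \FQsat$ is indispensable and why the remark preceding the statement identifies $q \notin \FQsat$ with $q \notin \sat{Q}$. The remaining ingredients—the ray invariance and the containment of the support of a finitely generated graded module in a finite union of translates of $Q$—are routine once this geometric fact is in place.
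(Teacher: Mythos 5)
Your proposal is correct and follows essentially the same route as the paper's own proof: identify $q \notin \sat{Q}$ via \ref{thm:charseminormal} and the hypothesis $q \notin \FQsat$, pick a facet $G$ with $\sigma_G(q)<0$, use \ref{lemma:ray} to get nonvanishing components in degrees $(1+mj)q$, and derive a contradiction with finite generation because $\sigma_G$ tends to $-\infty$ along this ray while it is bounded below on the support of a finitely generated module. The only difference is presentational: you spell out the containment $\supp \lochom{i} \subseteq \bigcup_k (g_k + Q)$ explicitly, whereas the paper leaves this routine fact implicit.
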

\begin{proof}
	Assume to the contrary that $\lochom{i}$ is finitely generated, say, in degrees $p_1,\dotsc, p_l \in \gp{Q}$.
	We assumed that $q \notin \sat{Q}$, so there exists a facet $F$ such that $\sigma_F(q) < 0$.
	By \Cref{lemma:ray}, there is an $m \in \NN$ such that $\lochom{i}_{(1+mj)q} \neq 0$ for every $j \in \NN$.
	For sufficiently large $j \in \NN$, it holds that $\sigma_F((1+mj)q) < \sigma_F(p_k)$ for every $k$, so the graded component in this degree cannot be generated by our supposed set of generators, a contradiction.
\end{proof}
\begin{corollary}\label{cor:CMlocal}
	Let $Q$ be a seminormal affine monoid such that $\lochom{i}_q = 0$ for every $q \in \FQsat$ and every $i < \sdim Q$. Then
	\[ \sdep \kQ = \min\set{\sdep \kQ[Q_F] + 1 \with F \text{ a face, } \sdim F = 1}\,. \]
	In particular, $\kQ$ is Cohen-Macaulay if and only if it is locally Cohen-Macaulay.
\end{corollary}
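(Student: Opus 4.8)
The plan is to identify $\sdep \kQ$ with the finiteness dimension of $\kQ$ and then to evaluate the latter by Faltings' theorem, checking that the relevant infimum is attained at the primes $\pF$ with $\sdim F = 1$. Throughout I use the fact (as in \ref{prop:dep}) that for a $*$local ring one has $\sdep \kQ = \min\set{i : \lochom{i} \neq 0}$; applied to $\kQ$ and to each homogeneous localization $\kQ[Q_F] = \kQ_{(\pF)}$ (whose $*$maximal ideal I write $\mm_F$), the asserted identity becomes a comparison of least nonvanishing cohomological degrees, shifted by one. I may assume $d := \sdim Q \geq 1$, the case $d=0$ being trivial, and I may assume $Q$ is non-normal, since a normal $Q$ is locally normal and then both sides equal $d$.

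First I would show that $\sdep \kQ$ equals the finiteness dimension $f_\mm(\kQ) := \min\set{i : \lochom{i} \text{ is not finitely generated}}$. For $i < d$, the hypothesis together with \ref{thm:charseminormal} forces every nonzero graded component $\lochom{i}_q$ to sit in a degree $q$ with $q \notin \FQsat$ (indeed $q \notin \sat{Q}$); hence by \ref{cor:sninfinite}, if $\lochom{i} \neq 0$ with $i < d$ then $\lochom{i}$ is \emph{not} finitely generated. Since the top module $\lochom{d}$ is never finitely generated (for $d \geq 1$ it is not of finite length), while a vanishing module is trivially finitely generated, the least $i$ with $\lochom{i} \neq 0$ coincides with the least $i$ with $\lochom{i}$ not finitely generated. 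Thus $\sdep \kQ = f_\mm(\kQ)$.

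Next I would invoke Faltings' Annihilator Theorem — legitimate since $\kQ$ is a homomorphic image of a polynomial ring — in the form $f_\mm(\kQ) = \min\set{\dep \kQ_\pp + \dim \kQ/\pp : \pp \neq \mm}$. Each $\pF$ with $\sdim F = 1$ is an admissible prime with $\dep \kQ_{\pF} = \sdep \kQ[Q_F]$ and $\dim \kQ/\pF = \sdim F = 1$, so restricting the minimum to these primes gives at once $\sdep \kQ = f_\mm(\kQ) \leq \min\set{\sdep \kQ[Q_F] + 1 : \sdim F = 1}$. For the reverse inequality I must show the global infimum is attained at such a $\pF$. Using the standard inequality $\dep R \le \dep R_\pp + \dim R/\pp$ applied along a saturated chain $\pp \subsetneq \dotsb \subsetneq \mm$ (so that passing to an immediate specialization raises the depth by at most one while lowering the coheight by one), the quantity $\dep \kQ_\pp + \dim \kQ/\pp$ does not increase as one moves up the chain; hence the minimum is attained at a prime of coheight one. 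A graded refinement of this argument (reducing to $*$primes, as elsewhere in the paper by localizing at $\mm$) shows one may take the prime graded, and the graded coheight-one primes are exactly the $\pF$ with $\sdim F = 1$. This gives the asserted equality.

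Finally, the \qq{in particular} follows formally: $\kQ$ is Cohen--Macaulay iff $\sdep \kQ = d$, which by the formula holds iff $\sdep \kQ[Q_F] = d-1 = \sdim \kQ[Q_F]$ for every $1$-$*$dimensional face $F$, i.e. iff every such $\kQ[Q_F]$ is Cohen--Macaulay; since Cohen--Macaulayness passes to further localizations and every non-minimal face contains a $1$-$*$dimensional face, this is precisely local Cohen--Macaulayness. The main obstacle is the reduction in the previous paragraph: converting Faltings' (ungraded) formula into a minimum over the \emph{graded} coheight-one primes $\pF$, which requires both the depth--coheight monotonicity along specializations and a graded version of Faltings' theorem. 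The seminormality hypothesis enters exactly to secure the equality $\sdep \kQ = f_\mm(\kQ)$ via \ref{cor:sninfinite}, without which the finiteness dimension, and hence the localizations, would not control the $*$depth.
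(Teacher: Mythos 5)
Your proposal is correct and takes essentially the same route as the paper: the hypothesis (via \ref{thm:charseminormal} and \ref{cor:sninfinite}) forces every non-vanishing local cohomology module of $\kQ$ to be non-finitely generated, so $\sdep \kQ$ equals the finiteness dimension, which is then computed by the graded Grothendieck/Faltings finiteness theorem with the minimum restricted to the coheight-one face primes $\pF$, $\sdim F = 1$. The paper's proof is exactly this, with your explicit reduction steps (top cohomology never finitely generated, monotonicity of $\dep \kQ_\pp + \dim \kQ/\pp$ along chains, passage to graded primes) delegated to the cited Theorem 13.1.17 and Exercise 9.5.4(i) of Brodmann--Sharp.
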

\begin{proof}
	Our hypothesis implies that all the non-vanishing local cohomology modules of $\kQ$ are not finitely generated.
	Therefore, the claim is a consequence of the graded Finiteness Theorem, cf. \cite[Theorem 14.3.10]{brodmannsharp2} and Exercise 9.5.4 (i) in the same book.
\end{proof}

\subsection{Cases of equality}
In this subsection, we identify several cases where the depth of $\kQ$ equals the upper bound given by \Cref{thm:main1}.
At first, we show that this is the case if the holes are of the simplest possible kind.
\begin{proposition}\label{thm:main2}
	If $\QoQ = q + F$ for an element $q \in \QoQ$ and a face $F$ of $Q$, then $\sdep \kQ = 1 + \sdim F$.
\end{proposition}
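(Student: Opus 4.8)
The plan is to reduce the computation of $\sdep \kQ$ to that of $\sdep \Quot$ via \ref{prop:dep} (which applies because $Q$ is not normal, as $q \in \QoQ$), and then to identify $\Quot$ explicitly. First I would check that
\[ \Quot \cong \kQ[F](-q) \]
as $\gp{Q}$-graded $\kQ$-modules, where $\kQ[F] = \kQ/\pF$. The map sends the basis monomial $\xx^f$ of $\kQ[F]$ (for $f \in F$) to $\xx^{q+f} \in \Quot$; since $\QoQ = q+F$, this is a bijection on bases. To see it is $\kQ$-linear, for $p \in Q$ and $f \in F$ one computes $\xx^p \cdot \xx^{q+f} = \xx^{q+(p+f)}$, which is the basis element when $p \in F$ (as then $p+f \in F$) and is $0$ otherwise: if $p \notin F$ then $q+(p+f) \notin q+F = \QoQ$, so $q+(p+f) \in Q$ and the monomial dies in $\Quot$. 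This matches the action on $\kQ[F]$ through the quotient $\kQ \to \kQ/\pF$.

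The heart of the argument is to show that the face $F$ is \emph{normal}. I would first verify the set identity $(\gp{F} + q) \cap \sat{Q} = q + F$. The inclusion $\supseteq$ is clear. For $\subseteq$, take $x = q + g \in \sat{Q}$ with $g = f_1 - f_2 \in \gp{F}$, $f_1,f_2 \in F$; if $x$ were in $Q$, then $x + f_2 = q + f_1 \in Q$, contradicting $q + f_1 \in q+F = \QoQ$, so $x \in \QoQ = q+F$ and $g \in F$. Now for the normalization $\sat{F} = \gp{F} \cap \cone{F}$ we have $\sat{F} \subseteq \gp{F}$ and $\sat{F} \subseteq \sat{Q}$, whence $q + \sat{F} \subseteq (\gp{F}+q) \cap \sat{Q} = q+F$. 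This forces $\sat{F} \subseteq F$, so $\sat{F} = F$ and $F$ is a normal affine monoid.

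With normality established, Hochster's Theorem (\cite[Theorem 6.10]{brunsgubel}) shows $\kQ[F]$ is Cohen--Macaulay, so its *depth equals its *dimension $\sdim F$. It then remains to transfer this to the $\kQ$-module $\Quot$: since $\pF$ annihilates $\kQ[F]$ and $\mm \supseteq \pF$, the local cohomology $H^i_\mm(\kQ[F])$ computed over $\kQ$ coincides with the local cohomology of $\kQ[F]$ with respect to its own *maximal ideal $\mm/\pF$ (note $\mm/\pF$ is exactly the ideal generated by $\xx^p$, $p \in F \setminus F_0$, since $F_0$ is also the minimal face of $F$). Hence $\sdep \Quot = \sdep \kQ[F] = \sdim F$, and \ref{prop:dep} gives $\sdep \kQ = \sdim F + 1$. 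As a consistency check, the upper bound also follows from \ref{thm:main1} applied to the family of holes $\gp{F}+q$, whose *dimension is $\sdim F$.

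The main obstacle I anticipate is the normality step: a priori $F$ is merely a face of $Q$ and need not be normal, so the entire argument hinges on extracting normality of $F$ from the hypothesis $\QoQ = q+F$. The clean route is the set identity $(\gp{F}+q)\cap\sat{Q} = q+F$, which formalizes the idea that the single family $\gp{F}+q$ already accounts for all holes. A secondary, more routine point to handle carefully is the identification of $\sdep \Quot$ over $\kQ$ with the intrinsic *depth of the monoid algebra $\kQ[F]$, i.e. that passing to the quotient ring leaves the relevant local cohomology unchanged.
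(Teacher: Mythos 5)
Your proof is correct and follows essentially the same route as the paper: identify $\Quot \cong \kQ[F]$ up to a degree shift, show $F$ is normal (your set identity $(\gp{F}+q)\cap\sat{Q} = q+F$ is the same computation as the paper's \ref{lemma:abschl}), and conclude via Hochster's Theorem together with \ref{prop:dep}. The only cosmetic difference is that you verify the cyclic-module identification and the normality lemma inline rather than citing \ref{lemma:translationel} and \ref{lemma:abschl}.
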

Note that it is not sufficient to require that there is only one family of holes, see \Cref{ex:lochom}.

\begin{proof}[Proof of \Cref{thm:main2}]
	The hypothesis is equivalent to the statement that $\Quot \cong {\kQ}/{\pF} = \kQ[F]$ (up to a shift in the grading).
	Recall that the *depth of a module $M$ over a ring $R$ equals its *depth over $R / \Ann M$.
	Together with \Cref{prop:dep} this yields
	\[ \sdep_{\kQ} \kQ = 1 + \sdep_{\kQ[F]} \kQ[F] \,.\]
	
	Next, note that $\sat{F} + q \subseteq \QoQ$. To the contrary, assume that there is an element $f \in \sat{F}$ such that $f + q \in Q$. We can write $f = f_1 - f_2$ with $f_1, f_2 \in F$. But $f + f_2 + q = f_1 + q\in \QoQ$ by assumption, a contradiction.
	
	Therefore we conclude that $\QoQ = F + q \subseteq \sat{F} + q \subseteq \QoQ$, so $F = \sat{F}$.
	Hence $\kQ[F]$ is normal and thus Cohen-Macaulay.
\end{proof}
\noindent We give an example how one can effectively compute the *depth using \Cref{thm:main2}.
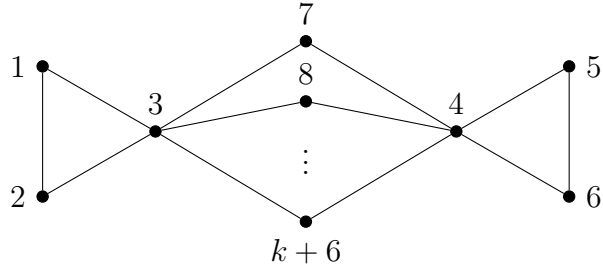
\begin{figure}[t]
	\centering
%	\tikzstyle{normal}=[circle,inner sep=0pt, fill=black,  minimum size=1.5mm, draw]
%
	\begin{tikzpicture}[scale=1, normal/.style={circle,inner sep=0pt, fill=black,  minimum size=1.5mm, draw}]
	\path (-3,0)
	node[normal, label=left:$1$]  (1) at +(120:1) {}
	node[normal, label=left:$2$]  (2) at +(240:1) {}
	node[normal, label=above:$3$] (3) at +(  0:1) {};
	
	\path (3,0)
	node[normal, label=above:$4$] (4) at +(180:1) {}
	node[normal, label=right:$5$] (5) at +( 60:1) {}
	node[normal, label=right:$6$] (6) at +(300:1) {};
	
	\path (0,0)
	node[normal, label=above:$7$]   (7) at +(0, 1.2) {}
	node[normal, label=above:$8$]   (8) at +(0, 0.4) {}
	node                                at +(0,-0.3) {\vdots}
	node[normal, label=below:$k+6$] (9) at +(0,-1.2) {};
	
	\draw (1) -- (2) -- (3) -- (1);
	\draw (4) -- (5) -- (6) -- (4);
	\draw (3) -- (7) -- (4);
	\draw (3) -- (8) -- (4);
	\draw (3) -- (9) -- (4);
	\end{tikzpicture}
	\caption{The graph $G_{k+6}$}
	\label{fig:Gkgraph}
\end{figure}
\begin{example}\label{ex:graph}
	\newcommand{\ee}{\mathbf e}
	Let $G$ be a graph with vertex set $V$ and edge set $E$. We associate an affine monoid to $G$, the \emph{toric edge ring} $k[G]$, introduced in \cite{OH98}. This is the monoid algebra to the monoid generated by the vectors $\ee_i + \ee_j \in \ZZ^{\#V}$, where $\set{i,j}$ is an edge of $G$ and $\ee_i, \ee_j$ denotes unit vectors indexed by the vertices of $G$.
	For positive $k \in \NN$ consider the graph $G_{k+6}$ in \Cref{fig:Gkgraph}.
	In \cite{HHKO} the *depth of the toric edge ring of this family of graphs is computed.
	We will show that these edge rings satisfy the assumption of \Cref{thm:main2},
	and thus give an alternative computation of the *depth.
	
	First, it is known that $\kQsat$ is generated as a $\kQ$-module by $x_1 x_2 x_3 x_4 x_5 x_6$, i.e. the monomial corresponding to the vector $q \in \sat{Q} \subseteq \RR^{k+6}$ which assigns $1$ to the vertices $1,\dotsc,6$.
	If we add one of the \qq{middle} edges, e.g. $\set{3,8}$, to $q$, then it is easy to see that the result lies in $Q$.
	On the other hand, if we add any combination of edges from the triangles to $q$, then the result will always be in $\QoQ$. To see this, note that the sum over the vertices of each triangle is always odd.
	This implies that $\QoQ = F + q$, where $F$ is the face spanned by the six edges in the triangles.
	The dimension of $F$ is $6$, so by \Cref{thm:main2} it follows that
	$\dep \kQ = 1 + 6 = 7$.
\end{example}

%\subsubsection{Zusammen}
If we relax the restriction on the holes, then we need additional assumptions on the affine monoid $Q$.
The next two results are of this kind.
%def simplicial
Recall that an affine monoid $Q$ is called \emph{simplicial} if its cross section polytope is a simplex.
Equivalently, $Q$ is simplicial if its face lattice is a boolean lattice.
Sometimes simplicial affine monoids are required to be positive, but we allow non-positive simplicial affine monoids.
Moreover, we call $Q$ \emph{locally simplicial}, if all its localisations are simplicial. Equivalently, $Q$ is locally simplicial if its cross section polytope is a simple polytope.

\begin{theorem}\label{thm:disjoint}
 	Assume that $Q$ is either simplicial or seminormal.
 	If the families of holes of $Q$ are pairwise disjoint, then the *depth of $\kQ$ equals one plus the smallest *dimension of a family of holes.
\end{theorem}

\begin{theorem}\label{thm:S2}
	Assume that $Q$ satisfies $(S_2)$. If $Q$ is
	\begin{itemize}
		\item either simplicial
		\item or seminormal, locally simplicial and $\lochom{i}_q = 0$ for every $q \in \FQsat$ and every $i < \sdim Q$,
	\end{itemize}
	then $\kQ$ is Cohen-Macaulay.
\end{theorem}
The first item in \Cref{thm:S2} generalized the well-known result that positive, simplicial affine monoids are Cohen-Macaualy, if they satisfy $(S_2)$ (cf. \cite{gws}) while the second item generalizes \cite[Cor. 5.6]{blrSeminorm}. For this, note the condition on the local cohomology is always satisfies it $Q$ is positive.

\begin{remark}
	Despite the technical assumptions in the preceding three results, they seem to be best possible in the following sense.
	There are affine monoids whose depth is strictly smaller than the bound of \Cref{thm:main1}, which satisfy the any of the following combinations of properties:
	\begin{enumerate}
		\item positive, locally simplicial, $(S_2)$, only one family of holes (\Cref{ex:lochom}, cf. \cite{trunghoa});
		\item positive, seminormal, simpicial; 
		\item positive, seminormal, $(S_2)$ (cf. \cite{blrSeminorm});
		\item seminormal, locally simplicial, $(S_2)$
	\end{enumerate}
	In all cases except the first one, the depth may even depend on the characteristic of the field.
	Note that in the last case $Q$ will be at least locally Cohen-Macaulay by \Cref{thm:S2}.
\end{remark}
% % % % % % % % % %

We need some preparations for the proofs of \Cref{thm:disjoint} and \Cref{thm:S2}.
Recall that we defined $\nab$ to be the set of faces of $Q$ such that $q \in Q_F$ for $q \in \gp{Q}$.
Let $\nabv$ be the corresponding subset of faces of the polytope $\Pc^\vee$ polar to the cross-section polytope $\Pc$ of $\cone{Q}$.
As $\nab$ is closed under taking supersets, $\nabv$ is closed under taking subsets and hence a polytopal subcomplex of $\Pc^\vee$.
Let $\snab$ denote the set of faces of $Q$ such that $q \in \sat{Q}_F$.
\begin{lemma}\label{lemma:snab}
	Let $q \in \gp{Q}$.
	Assume that either $Q$ is simplical or $q \in -\sat{Q}$.
	Then $\snab$ has a unique minimal element.
	Equivalently, $\snabv$ is (the boundary complex of) a face of $\Pc^\vee$.
\end{lemma}
\begin{proof}
	The claim is a statement about $\sat{Q}$, so we may assume that $Q = \sat{Q}$.
	
%simplicial
	First we consider the case that $Q$ is simplicial.
 	We write $\Fc_{\geq}$ for the set of facets $F$ of $Q$ such that $\sigma_F(q) \geq 0$ and we write $\Fc_{<}$ for the set of facets $F$ such that $\sigma_F(q) < 0$.
 	Our candidate for the unique minimal element of $\nab$ is the intersection $G$ of the facets in $\Fc_{\geq}$.
 	This is indeed a face because $Q$ is simplicial. 
 	If $p \in \intt G$ is an interior element, then by construction $\sigma_F(p) > 0$ for all $F$ in $\Fc_{<}$.
 	Therefore, $q + m p \in Q$ for $m \gg 0$ and hence $q \in Q_G$ and thus $G \in \nab$.
 	On the other hand, let $G'\in \nab$ be face. Then $q \in Q_{G'}$,
 	so there exists an element $g \in G'$ such that $q+g \in Q$.
 	It follows that $\sigma_F(g) > 0$ for all $F \in \Fc_{<}$.
 	Hence $G'$ is not contained in any facet in $\Fc_{<}$ and can therefore be written as an intersection of facets in $\Fc_{\geq}$.
 	It follows that $G \subset G'$, so $G$ is minimal.
 
%seminormal
	Next, we consider the case that $q \in -Q$.
 	There exists a unique face $G$ such that $q \in - \intt G$.
 	Evidently $q \in Q_G$.
 	We show that this $G$ is the unique minimal element of $\nab$.
 	So let $F$ be a facet of $Q$ that does not contain $G$.
 	Then $\sigma_F(q) < 0$ and hence $q \notin Q_F$.
 	The same holds then for every face contained in $F$.
 	It follows that every face $G' \in \nab$ is contained only in the facets containing $G$.
 	But $G$ is the intersection of the facets containing it, hence $G \subseteq G'$.
 	So $G$ is the unique minimal element of $\nab$.
\end{proof}
We recall the formula \eqref{eq:nabla} from above
\begin{equation*}
\lochom{i}_q = \tilde{H}_{d-1-i}(\nabv, \kk)
\end{equation*}
as it will be the key ingredient in the sequel.

\begin{proof}[Proof of \Cref{thm:disjoint}]
	Consider an element $q \in \gp{Q}$.
	If $Q$ is seminormal and $q \notin -\sat{Q}$, then $\lochom{i}_q = 0$ for all $i$ by \Cref{thm:charseminormal}.
	Otherwise we are in the situation of the preceding lemma, so $\snabv$ is the boundary complex of a polytope.
	By assumption, $q$ is contained in at most one family of holes, so $\nabv$ is obtained from $\snabv$ be removing at most one face (and all faces containing it).
	It follows that $\nabv$ is either a ball or a sphere, so the claim follows from \eqref{eq:nabla}.
\end{proof}

\begin{proof}[Proof of \Cref{thm:S2}]
	Let $q \in \gp{Q}$. Again, we may restrict our attention ot the situation of \Cref{lemma:snab}.
	
	We show that $\snabv$ is a (full) simplex.
	If $Q$ is simplicial, then $\Pc$ and (thus) $\Pc^\vee$ are simplices, so $\snabv$ is also a simplex.
	On the other hand, if $Q$ is locally simplicial, then $\Pc$ is a simple polytope and hence $\Pc^\vee$ is a simplicial polytope.
	Moreover, we assumed that $\lochom{i}$ vanishes on $\FQsat$, so we may assume that $q \in -\intt F$ for a face $F \subset Q$ with positive *dimension.
	Hence $\snabv$ is a proper face of $\Pc^\vee$ and thus a simplex.
	
	To conclude the proof, recall that $(S_2)$ is equivalent to the statement that all families of holes have *dimension $d-1$.
	So $\nabv$ is obtained from $\snabv$ by removing some vertices.
	Hence $\snabv$ is either a simplex and thus acyclic, or empty with homology in degree $-1$. Now the claim follows from \eqref{eq:nabla}.
\end{proof}

There is an alternative way to derive the seminormal case of \Cref{thm:S2} from the simplicial case.
Assume that $Q$ is locally simplicial, seminormal, satisfies $(S_2)$ and $\lochom{i}_q = 0$ for every $q \in \FQsat$.
Then $\kQ$ is locally Cohen-Macaulay by \Cref{thm:S2} and hence Cohen-Macaulay by \Cref{cor:CMlocal}.

\printbibliography{}
\end{document}